\newtheorem{thm}{Theorem}
\newtheorem{lem}{Lemma} 
\newtheorem{proposition}{Proposition}
\newtheorem{cor}{Corollary}
\newtheorem{df}{Definition}
\newtheorem{rem}{Remark\/}
\newenvironment{proof}{\noindent {\bf Proof.} \ignorespaces}{\nopagebreak\hspace*{\fill}$\square$\newline}
\newcommand \capital {\mathbb} 
\newcommand \italic {\mathcal} 
\newcommand \boldletter {\boldsymbol} 
\begin{document}
	
	\title{An improved existence theorem for rigid nonlinearly elastic plates}
	\author{Trung Hieu Giang
		\footnote{corresponding author}
		\footnote{Addresses of Trung Hieu Giang: Department of Mathematical Analysis, Faculty of Mathematics and Physics, Charles University, Praha, Czech Republic.  Institute of Mathematics, Vietnam Academy of Science and Technology, 18 Hoang Quoc Viet, Hanoi, Vietnam. \textit{Email address: trung-hieu.giang@matfyz.cuni.cz}
		} 
		\  and
		Cristinel Mardare
		\footnote{Addresses of Cristinel Mardare: Sorbonne Universit\'e, Universit\'e Paris Cit\'e, CNRS, INRIA, Laboratoire Jacques-Louis Lions, LJLL, 
			F-75005 Paris, France. Institute for Advanced Study in Mathematics, Harbin Institute of Technology, China. \textit{Email address: Cristinel.Mardare@math.cnrs.fr}
	}}
	\date{\today}
	\maketitle
	
	\begin{abstract}
		A plate is rigid if its admissible displacement fields inducing vanishing two-dimensional strain tensors must vanish. We prove that the nonlinear model of Kirchhoff-Love for such a plate has a solution for any applied forces and boundary conditions. Then we give sufficient conditions on the data ensuring the rigidity of the plate. Together, these results substantially generalize an existence theorem by Rabier whereby the plate is assumed to be clamped on its entire boundary. 
	\end{abstract}
	
	\
	
	\noindent 
	{\bf 2020 Mathematical Subject Classification:} 74B20, 74G22, 74K20.
	
	\
	
	\noindent 
	{\bf Keywords: } nonlinear elasticity, existence of solutions, equilibrium problems, plates, Kirchhoff-Love.
	
	% BODY 
	
	\section{Introduction}
	\label{s1}
	
	Consider a plate with thickness $2\varepsilon>0$ made of a homogeneous and isotropic elastic material with Lam\'e constants $\lambda\geq 0$ and $\mu>0$. Assume that the plate occupies in its natural state (that is, an unstressed state) the three-dimensional set  
	\begin{equation*}
		{\overline\omega}\times [-\varepsilon,\varepsilon]
	\end{equation*}
	where $\omega\subset{\capital{R}}^2$ is a bounded and connected open set whose boundary is Lipschitz-continuous in the sense of Adams \& Fournier \cite{ada}, and that it is subjected to applied body and surface forces with densities $(f^\varepsilon_i)\in L^2(\Omega^\varepsilon;{\capital{R}}^3)$ per unit volume and $(g^\varepsilon_i)\in L^2(\Gamma^\varepsilon_\pm;{\capital{R}}^3)$ per unit area, where
	\begin{equation*}\aligned{}
		\Omega^\varepsilon&:=\omega\times (-\varepsilon,\varepsilon),\\
		\Gamma^\varepsilon_\pm&:=\omega\times \{-\varepsilon,\varepsilon\},
		\endaligned\end{equation*}
	
	Then Kirchhoff-Love's two-dimensional theory of plates asserts that the displacement field  
	\begin{equation*}
		{\boldletter{u}}:=(u_i):\omega\to{\capital{R}}^3
	\end{equation*}
	of the middle section of the plate arising in response to these applied forces minimizes the functional $J:V(\omega)\to{\capital{R}}$, where  
	\begin{equation*}
		V(\omega)\subset H^1(\omega)\times H^1(\omega)\times H^2(\omega)
	\end{equation*}
	is the set of all admissible displacement fields of the middle section of the plate, 
	\begin{equation}\label{e1}\aligned{} 
		J({\boldletter{u}}):=\frac{\varepsilon}2 \|{\boldletter{E}}({\boldletter{u}})\|_{\lambda,\mu}^2+\frac{\varepsilon^3}6 \|{\boldletter{F}}({\boldletter{u}})\|_{\lambda,\mu}^2-L({\boldletter{u}}) 
		\endaligned\end{equation}
	is the two-dimensional total energy of the plate, and
	\begin{equation}\label{e2}\aligned{} 
		L({\boldletter{u}}):=\int_{\omega} (p^\varepsilon_i u_i- q^\varepsilon_\alpha\partial_\alpha u_3) dy
		\endaligned\end{equation}
	is the work of the two-dimensional forces acting on the middle section of the plate.
	
	The tensor fields ${\boldletter{E}}({\boldletter{u}})=(E_{\alpha\beta}({\boldletter{u}}))\in L^2(\omega;{\capital{S}}^2)$ and ${\boldletter{F}}({\boldletter{u}})=(F_{\alpha\beta}({\boldletter{u}}))\in L^2(\omega;{\capital{S}}^2)$ appearing in the definition \eqref{e1} of the functional $J$ are defined by 
	\begin{equation}\label{e3}\aligned{} 
		E_{\alpha\beta}({\boldletter{u}})&:=\frac12(\partial_\alpha u_\beta+\partial_\beta u_\alpha+\partial_\alpha u_3\partial_\beta u_3),\\
		F_{\alpha\beta}({\boldletter{u}}) &:=\partial_{\alpha\beta} u_3,
		\endaligned\end{equation}
	for all ${\boldletter{u}}=(u_i)\in H^1(\omega)\times H^1(\omega)\times H^2(\omega)$, while the norm $\|\cdot\|_{\lambda,\mu}: L^2(\omega;{\capital{S}}^2)\to{\capital{R}}$ appearing in the same definition is defined by 
	\begin{equation}\label{e4}\aligned{} 
		\|(s_{\alpha\beta})\|_{\lambda,\mu}^2:=\int_\omega a^{\alpha\beta\sigma\tau}s_{\sigma\tau}(y)s_{\alpha\beta}(y) dy
		\endaligned\end{equation}
	for all $(s_{\alpha\beta})\in L^2(\omega;{\capital{S}}^2)$, where 
	\begin{equation}\label{e5}\aligned{} 
		a^{\alpha\beta\sigma\tau}:=\frac{4\lambda\mu}{\lambda+2\mu} \delta_{\sigma\tau}\delta_{\alpha\beta}
		+2\mu( \delta_{\alpha\sigma}\delta_{\beta\tau}+ \delta_{\alpha\tau}\delta_{\beta\sigma}). 
		\endaligned\end{equation}
	Note that the four-order tensor $a^{\alpha\beta\sigma\tau}$ is the two-dimensional elasticity tensor associated with the elastic material constituting the plate and that the norm $\|\cdot\|_{\lambda,\mu}$ is equivalent with the usual norm of the Lebesgue space $L^2(\omega;{\capital{S}}^2)$,  thanks to the positivity of the Lam\'e constants of the elastic material constituting the plate. In fact, as is well known, the positivity of the Lam\'e constants can be replaced by the weaker assumption that $\mu>0$ and $\lambda>-\frac23\mu$. 
	
	Finally, the functions $p_i^\varepsilon\in L^2(\omega)$ and $q_\alpha^\varepsilon\in L^2(\omega)$ appearing in the definition \eqref{e2} of the linear form $L$ are defined in terms of the densities of the applied forces on the plate by the relations:
	\begin{equation}\label{e6}\aligned{} 
		p^\varepsilon_i&:=\int_{-\varepsilon}^\varepsilon f^\varepsilon_i(\cdot,x_3) dx_3+g^\varepsilon_i(\cdot,+\varepsilon)+g^\varepsilon_i(\cdot,-\varepsilon),\\
		q^\varepsilon_\alpha&:=\int_{-\varepsilon}^\varepsilon x_3 g^\varepsilon_\alpha(\cdot,x_3) dx_3+\varepsilon\Big(g^\varepsilon_\alpha(\cdot,+\varepsilon)-g^\varepsilon_\alpha(\cdot,-\varepsilon)\Big).
		\endaligned\end{equation}
	
	Over the past few decades, several papers have been devoted to studying existence results for this nonlinearly elastic plate model under various sets of assumptions. Ne\v cas \& Naumann \cite{nec} proved that the functional $J$ has at least one minimizer when the plate is subjected to a perpendicular load. Ciarlet \& Destuynder \cite{cd} proved the existence of a minimizer to $J$ provided that the norms $\|p_\alpha^\varepsilon\|_{L^2(\omega)}$ are small enough. Rabier \cite{rab} proved that if the plate is totally clamped, then the smallness assumption in \cite{cd} is not necessary for proving the existence of a minimizer to $J$. 
	
	Among the above results, most relevant to this paper is Rabier's use of the boundary conditions to remove the smallness assumption on the tangential forces needed in \cite{cd} to prove the coerciveness of the functional $J$ over the space $V(\omega)$, hence the existence of minimizers for the above Kirchhoff-Love plate model. But Rabier's method applies only to totally clamped plates, that is, for plates whose admissible displacement fields satisfy the full Dirichlet boundary conditions associated with the Euler-Lagrange equations satisfied by the smooth minimizers of $J$. 
	
	Essential in Rabier's approach is that a totally clamped plate is rigid, in the sense that its admissible displacement fields satisfy property \eqref{e7} in Section \ref{s3} below, which is a necessary condition for the infimum of $J$ over $V(\omega)$ to be finite. 
	
	One objective of this paper, achieved in Theorem \ref{t2}, is to prove that this rigidity property is also sufficient for the existence of minimizers of $J$ over $V(\omega)$. 
	
	The other objective of this paper, achieved in Theorems \ref{t3}, \ref{t4}, \ref{t5}, and \ref{t6}, is to give sufficient conditions for the rigidity property to hold. These sufficient conditions are boundary conditions on the transverse component $u_3$ of the displacement fields $\boldletter{u}=(u_i)\in V(\omega)$ in Theorem \ref{t3}, while they are boundary conditions on the tangential components $u_\alpha$ of $\boldletter{u}$ in Theorems \ref{t4}, \ref{t5} and \ref{t6}. The existence theorems obtained by combining these rigidity results with Theorem \ref{t2} improve Rabier's existence theorem in \cite{rab}, since they hold under weaker boundary conditions than the full Dirichlet boundary conditions used in \cite{rab}; see Corollaries \ref{c1}, \ref{c2}, \ref{c3} and \ref{c4}. Note, however, that Corollary \ref{c1} only applies to convex domains $\omega$, while Corollaries \ref{c2}, \ref{c3}, and \ref{c4} apply to general domains $\omega$, as does Rabier's existence theorem in \cite{rab}. 
	
	The paper is organized as follows. In the next section, we introduce the notation used in this paper. Section \ref{s3} establishes an existence theorem for the Kirchhoff-Love nonlinear plate model under the sole assumption that the plate is rigid (so irrespectively of the conditions ensuring the rigidity of the plate). 
	
	Sections \ref{s4} and \ref{s5} give several sets of sufficient conditions ensuring the rigidity of the plate: Section \ref{s4} under the assumption that $\omega$ is convex and the transversal displacements vanish on its boundary, while Section \ref{s5} for general domains $\omega$ under the assumption that the tangential displacements vanish on its boundary, or on a part of it that is sufficiently large in a specific sense.

	\section{Notation}
	\label{s2}
	
	Throughout this paper, Greek indices and exponents range in the set $\{1,2\}$, while Latin indices and exponents range in the set $\{1,2,3\}$, unless they are used for indexing sequences or otherwise specified in the text. 
	
	The summation convention with respect to repeated indices and exponents is used, so that a relation such as $\nu_\alpha e_\alpha=0$ on $\partial\omega$ means that $\nu_1 e_1+\nu_2 e_2=0$ on $\partial\omega$. A relation such as $\partial_\alpha u_3=0$ in $\omega$ means that it holds for all indices $\alpha$ according to the above convention, so it is equivalent to $\partial_1 u_3=\partial_2 u_3=0$ in $\omega$. 
	
	Strong and weak convergences in any normed vector space are respectively denoted 
	$\to\text{ and }\rightharpoonup.$
	
	The notation $y=(y_\alpha)$ denotes a generic point in $\mathbb{R}^2$ and partial derivatives, in the classical or distributional sense, are denoted $\partial_\alpha:= \partial/\partial y_\alpha$ and $\partial_{\alpha\beta}:= \partial^2/\partial y_\alpha \partial y_\beta$.
	
	Vector and matrix fields are denoted by boldface letters. The subspace of $\mathbb{R}^{2\times 2}$ formed by all symmetric matrices is denoted $\mathbb{S}^2$. The notation $(c_{\alpha\beta})$ designate the $2\times 2$ matrix whose component at its $\alpha$-row and $\beta$-column is $c_{\alpha\beta}$.
	
	The usual norm of the Lebesgue space $L^p (\omega)$, $1 \leq p \leq \infty$, is denoted by $\| \cdot \|_{L^p(\omega)}$. The notation $L^p (\omega; \mathbb{R}^{m\times n})$ denotes for every positive integers $m$ and $n$ the space of matrix fields $\boldsymbol{A}=(A_{ij}): \omega \to \mathbb{R}^{m \times n}$ with components $A_{ij}$ in the Lebesgue space $L^p(\omega)$. This space of matrix-valued functions is equipped with the norm
	\begin{equation*}
		\|\boldsymbol{A}\|_{L^p(\omega)} := \Big( \sum\limits_{i=1}^m\sum\limits_{j=1}^n \|A_{ij}\|^p_{L^p(\omega)}\Big)^{1/p} \textrm{ for all } \boldsymbol{A} \in L^p (\omega; \mathbb{R}^{m\times n}).
	\end{equation*}
	Note that we use the same notation for the norm of scalar, vector, and matrix, valued functions.
	
	The usual norm of the Sobolev space $W^{m,p}(\omega)$, $m \in \mathbb{N}^*$, $1 \leq p \leq \infty$, is denoted by $\| \cdot \|_{W^{m,p}(\omega)}$. The notations $H^1(\omega)$ and $H^2(\omega)$ respectively denote the spaces $W^{1,2}(\omega)$ and $W^{2,2}(\omega)$.

	\section{Existence theorem for rigid plates}
	\label{s3}
	
	Let $V(\omega)$ denote the set of all admissible displacements of a nonlinearly elastic plate whose total energy is the functional $J$ defined by \eqref{e1}-\eqref{e6}. 
	
	We prove in this section that if $V(\omega)$ satisfies a certain rigidity assumption (see \eqref{e7}), then the energy functional $J$ has a minimizer in the set $V(\omega)$. The point of this result is that it dissociates the proof of existence of minimizers from the boundary conditions that the admissible displacements might satisfy. Thus, establishing an existence theorem reduces to verifying whether the rigidity assumption \eqref{e7} is satisfied or not by a given plate. 
	
	To begin with, we recall the following property of the functional $J$, as well as its proof for the reader's convenience.

	\begin{thm}
		\label{t1}
		Let $\omega\subset{\capital{R}}^2$ be a bounded and connected open set whose boundary is Lipschitz-continuous.  Then the  functional $J:H^1(\omega)\times H^1(\omega)\times H^2(\omega)\to{\capital{R}}$ defined by \eqref{e1}-\eqref{e6} is sequentially weakly lower semicontinuous.
	\end{thm}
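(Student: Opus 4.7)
The plan is to decompose the functional as $J = J_1 + J_2 - L$ where $J_1(\boldsymbol{u}) := \tfrac{\varepsilon}{2}\|\boldsymbol{E}(\boldsymbol{u})\|_{\lambda,\mu}^2$ and $J_2(\boldsymbol{u}) := \tfrac{\varepsilon^3}{6}\|\boldsymbol{F}(\boldsymbol{u})\|_{\lambda,\mu}^2$, and to verify separately that each piece is sequentially weakly lower semicontinuous on $H^1(\omega) \times H^1(\omega) \times H^2(\omega)$, summing the three inequalities at the end. Throughout, I fix a sequence $\boldsymbol{u}^n \rightharpoonup \boldsymbol{u}$ in this product space.

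The linear form $L$ is actually weakly continuous: its coefficients $p_i^\varepsilon$ and $q_\alpha^\varepsilon$ lie in $L^2(\omega)$, and the maps $\boldsymbol{u} \mapsto u_i$ and $\boldsymbol{u} \mapsto \partial_\alpha u_3$ are continuous from $H^1(\omega) \times H^1(\omega) \times H^2(\omega)$ into $L^2(\omega)$, so $L(\boldsymbol{u}^n) \to L(\boldsymbol{u})$. Next, the bending tensor $\boldsymbol{F}(\boldsymbol{u}) = (\partial_{\alpha\beta}u_3)$ depends linearly on the second derivatives of $u_3$, so $\boldsymbol{F}(\boldsymbol{u}^n) \rightharpoonup \boldsymbol{F}(\boldsymbol{u})$ in $L^2(\omega;\mathbb{S}^2)$. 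Since $\|\cdot\|_{\lambda,\mu}^2$ is the square of a norm equivalent to the usual $L^2$ norm, it is convex and continuous on $L^2(\omega;\mathbb{S}^2)$, hence sequentially weakly lower semicontinuous, giving $\liminf_n J_2(\boldsymbol{u}^n) \geq J_2(\boldsymbol{u})$.

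The delicate term is the membrane energy $J_1$, owing to the quadratic nonlinearity $\tfrac{1}{2}\partial_\alpha u_3\partial_\beta u_3$ in $E_{\alpha\beta}(\boldsymbol{u})$. The symmetric linear part $\tfrac{1}{2}(\partial_\alpha u_\beta^n + \partial_\beta u_\alpha^n)$ obviously converges weakly in $L^2(\omega)$. To pass the nonlinear part to the limit, I invoke the Rellich--Kondrachov theorem: since $\omega$ is bounded and Lipschitz, in dimension two the embedding $H^2(\omega) \hookrightarrow W^{1,4}(\omega)$ is compact, so $\partial_\alpha u_3^n \to \partial_\alpha u_3$ strongly in $L^4(\omega)$. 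H\"older's inequality then yields $\partial_\alpha u_3^n \partial_\beta u_3^n \to \partial_\alpha u_3 \partial_\beta u_3$ strongly in $L^2(\omega)$. Combining both contributions, $\boldsymbol{E}(\boldsymbol{u}^n) \rightharpoonup \boldsymbol{E}(\boldsymbol{u})$ in $L^2(\omega;\mathbb{S}^2)$, and weak lower semicontinuity of $\|\cdot\|_{\lambda,\mu}^2$ gives $\liminf_n J_1(\boldsymbol{u}^n) \geq J_1(\boldsymbol{u})$.

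The main obstacle is precisely this quadratic nonlinearity: a product of two merely weakly convergent sequences in $L^2$ need not converge, even weakly, to the product of the limits. The rescue is the two-dimensional Sobolev embedding, which upgrades weak $H^2$ convergence of $u_3^n$ to strong $W^{1,4}$ convergence of $\nabla u_3^n$, enough to make the product pass to the limit strongly in $L^2$. Adding the three inequalities yields $\liminf_n J(\boldsymbol{u}^n) \geq J(\boldsymbol{u})$, as required.
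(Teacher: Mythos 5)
Your proposal is correct and follows essentially the same route as the paper's proof: both rest on the compact embedding $H^2(\omega)\hookrightarrow W^{1,4}(\omega)$ to upgrade the weak convergence of $u_3^n$ to strong $W^{1,4}$ convergence, deduce weak $L^2$ convergence of ${\boldletter{E}}({\boldletter{u}}^n)$ and ${\boldletter{F}}({\boldletter{u}}^n)$, and conclude by weak lower semicontinuity of the norm $\|\cdot\|_{\lambda,\mu}$ together with the weak continuity of $L$. Your write-up merely makes a few steps more explicit (H\"older for the product, convexity plus continuity for the lower semicontinuity of the squared norm) than the paper does.
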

	
	\begin{proof}
		Since $\omega$ is a bounded Lipschitz in ${\capital{R}}^2$, $H^2(\omega)\subset W^{1,4}(\omega)$ and the embedding is compact; cf. Adams \& Fournier \cite{ada}. Then, if a sequence ${\boldletter{u}}^n=(u^n_i)$, $n\in{\capital{N}}$, and an element ${\boldletter{u}}=(u_i)$ in the space $H^1(\omega)\times H^1(\omega)\times H^2(\omega)$ satisfy  
		\begin{equation*}
			u_\alpha^n \rightharpoonup u_\alpha \text{ \ in } H^1(\omega) \text{ when } n\to \infty  
		\end{equation*}
		and  
		\begin{equation*}
			u_3^n \rightharpoonup u_3 \text{ \ in } H^2(\omega) \text{ when } n\to \infty,
		\end{equation*}
		where $\rightharpoonup$ denotes the convergence with respect to the weak topology of the space in which the convergence takes place, then  
		\begin{equation*}
			u_3^n \to u_3 \text{ \ in } W^{1,4}(\omega) \text{ when } n\to \infty,
		\end{equation*}
		where $\to$ denotes the convergence with respect to the strong topology (induced by the norm of the space). 
		
		Consequently
		\begin{equation*}\aligned{}
			E_{\alpha\beta}({\boldletter{u}}^n)& \rightharpoonup E_{\alpha\beta}({\boldletter{u}})\text{ \ in } L^2(\omega) \text{ when } n\to \infty,\\
			F_{\alpha\beta}({\boldletter{u}}^n)& \rightharpoonup F_{\alpha\beta}({\boldletter{u}})\text{ \ in } L^2(\omega) \text{ when } n\to \infty.
			\endaligned\end{equation*}
		Then 
		\begin{equation*}\aligned{}
			\|{\boldletter{E}}({\boldletter{u}})\|_{\lambda,\mu} &\leq \liminf_{n\to\infty} \|{\boldletter{E}}({\boldletter{u}}^n)\|_{\lambda,\mu},\\
			\|{\boldletter{F}}({\boldletter{u}})\|_{\lambda,\mu} &\leq \liminf_{n\to\infty} \|{\boldletter{F}}({\boldletter{u}}^n)\|_{\lambda,\mu},\\
			L({\boldletter{u}})&= \lim_{n\to\infty}L({\boldletter{u}}_n),
			\endaligned\end{equation*}
		so that, in view of the definition \eqref{e1} of the functional $J$, we have
		\begin{equation*}
			J({\boldletter{u}})\leq \liminf_{n\to\infty} J({\boldletter{u}}^n).
		\end{equation*}
	\end{proof}
	
	The functional $J:V(\omega)\to{\capital{R}}$ being sequentially weakly lower semicontinuous on any subset $V(\omega)$ of the Hilbert space $H^1(\omega)\times H^1(\omega)\times H^2(\omega)$ equipped with the induced topology, in order to prove that $J$ has a minimizer in the set $V(\omega)$ it suffices to prove that $V(\omega)$ is sequentially weakly closed in $H^1(\omega)\times H^1(\omega)\times H^2(\omega)$ and that $J$ possesses a sequence of minimizers that is bounded in the space $H^1(\omega)\times H^1(\omega)\times H^2(\omega)$.
	
	In the absence of any smallness assumptions of the tangential forces, like in Ciarlet \& Destuynder \cite{cd}, the existence or nonexistence of a bounded sequence of minimizers for $J$ depends on whether the plate is rigid or not. By definition, a plate is called rigid if the set $V(\omega)$ of all its admissible displacements has the following property:
	\begin{equation}\label{e7}\aligned{} 
		{\boldletter{u}} \in V(\omega) \text{ and } {\boldletter{E}}({\boldletter{u}})={\boldletter{0}} \text{ in } \omega \text{ implies } {\boldletter{u}}={\boldletter{0}} \text{ in } \omega.
		\endaligned\end{equation}
	
	If a plate is not rigid, then there exists ${\boldletter{w}}=(w_i)\neq{\boldletter{0}}$ in $V(\omega)$ such that ${\boldletter{E}}({\boldletter{w}})={\boldletter{0}}$. Assuming that $V(\omega)=V_H(\omega)\times V_3(\omega)$ where $V_H(\omega)$ is a proper subspace of $H^1(\omega)\times H^1(\omega)$ and $V_3(\omega)$ is a subspace of $H^2(\omega)$, then for each real number $t\in{\capital{R}}$, the vector field ${\boldletter{w}}_t:=(t^2 w_1,t^2 w_2, tw_3)$ belongs to $V(\omega)$ and satisfies  
	\begin{equation*}
		{\boldletter{E}}({\boldletter{w}}_t)=t^2{\boldletter{E}}({\boldletter{w}})={\boldletter{0}}.
	\end{equation*}
	Therefore,
	\begin{equation*}
		J({\boldletter{w}}_t):=t^2\frac{\varepsilon^3}6 \|{\boldletter{F}}({\boldletter{w}})\|_{\lambda,\mu}^2-t^2\int_{\omega} p^\varepsilon_\alpha w_\alpha dy - t\int_\omega (p_3 w_3-q^\varepsilon_\alpha\partial_\alpha w_3) dy  
	\end{equation*}
	for all $t\in{\capital{R}}$. Consequently, there exist functions $p^\varepsilon_\alpha\in L^2(\omega)$ such that  
	\begin{equation*}
		\inf_{{\boldletter{u}}\in V(\omega)} J({\boldletter{u}}) =-\infty.
	\end{equation*}
	This proves that the existence of minimizers for a nonrigid plate cannot be proved for general applied forces (this means without any restriction on the densities $p^\varepsilon_\alpha\in L^2(\omega)$ of the applied forces). In this sense, the existence theorem of Ciarlet \& Destuynder \cite{cd}, whereby the existence of minimizers of $J$ is proved under the assumption that $\sum_\alpha \|p^\varepsilon_\alpha\|_{L^2(\omega)}$ is small enough, is essentially optimal for nonrigid plates. 
	
	If the plate is rigid, the existence of minimizers has been established by Rabier \cite{rab} for general applied forces, but under the additional assumption that the admissible displacements ${\boldletter{u}}=(u_i)\in V(\omega)$ satisfy the boundary conditions $u_i=\partial_\nu u_3=0$ on $\partial\omega$. 
	
	The next theorem improves this result by removing this additional assumption. Thus, if a plate is rigid, the existence of a minimizer for the functional $J$ can be established in the general case, that is, for general applied forces. Sufficient conditions ensuring the rigidity of the plate, hence the existence of a minimizer, are given in Sections \ref{s4} and \ref{s5}. 
	
	Note that assumption \eqref{e8} in the next theorem means that the space $V_3(\omega)$ does not contain any non-zero affine function. 
	\begin{thm}
		\label{t2}
		Let $\omega\subset{\capital{R}}^2$ be a bounded and connected open set whose boundary is Lipschitz-continuous. Let $V(\omega)=V_H(\omega)\times V_3(\omega)$, where $V_H(\omega)$ is closed subspace of $H^1(\omega)\times H^1(\omega)$ and 
		$V_3(\omega)$ is closed subspace of $H^2(\omega)$ such that 
		\begin{equation}\label{e8}\aligned{} 
			u_3 \in V_3(\omega) \textup{ and } \partial_{\alpha\beta}u_3=0 \textup{ in } \omega \textup{ imply } u_3=0 \textup{ in } \omega.
			\endaligned\end{equation}
		
		Assume that $V(\omega)$ satisfies the following rigidity property:
		\begin{equation}\label{e9}\aligned{} 
			{\boldletter{u}} \in V(\omega) \textup{ and } {\boldletter{E}}({\boldletter{u}})={\boldletter{0}} \textup{ in } \omega \textup{ imply } {\boldletter{u}}={\boldletter{0}} \textup{ in } \omega.
			\endaligned\end{equation}
		Then the functional $J$ defined by \eqref{e1}-\eqref{e6} has a minimizer in the set $V(\omega)$.
	\end{thm}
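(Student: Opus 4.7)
The plan is to apply the direct method of the calculus of variations. Since $V(\omega)=V_H(\omega)\times V_3(\omega)$ is a closed subspace of the Hilbert space $H^1(\omega)\times H^1(\omega)\times H^2(\omega)$, it is weakly closed by Mazur's theorem; combined with the sequential weak lower semicontinuity established in Theorem~\ref{t1}, it suffices to exhibit a bounded minimizing sequence. Thus I would fix a minimizing sequence $\boldsymbol{u}^n=(u_i^n)\in V(\omega)$ with $J(\boldsymbol{u}^n)\le M$ and derive a priori bounds in two steps, writing $\boldsymbol{u}_H^n:=(u_1^n,u_2^n)$ for the tangential components.

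The first step uses assumption \eqref{e8} to bound $u_3^n$ in terms of $\boldsymbol{u}_H^n$. A compactness argument based on the compact embedding $H^2(\omega)\hookrightarrow H^1(\omega)$ and on \eqref{e8} (which rules out nonzero affine functions in $V_3(\omega)$) shows that the seminorm $v\mapsto\|D^2 v\|_{L^2(\omega)}$ is equivalent on $V_3(\omega)$ to the full $H^2(\omega)$-norm; hence $\|u_3^n\|_{H^2(\omega)}\le C\|\boldsymbol{F}(\boldsymbol{u}^n)\|_{\lambda,\mu}$. Bounding the linear form by $|L(\boldsymbol{u})|\le C(\|\boldsymbol{u}_H\|_{L^2(\omega)}+\|u_3\|_{H^2(\omega)})$ and using $J(\boldsymbol{u}^n)\le M$ with absorption yields
\[\|u_3^n\|_{H^2(\omega)}\le C\bigl(1+\|\boldsymbol{u}_H^n\|_{L^2(\omega)}^{1/2}\bigr).\]

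The second and decisive step is to bound $\boldsymbol{u}_H^n$ in $H^1$; here the rigidity property enters. Arguing by contradiction, assume $t_n:=\|\boldsymbol{u}_H^n\|_{H^1(\omega)}\to\infty$ and rescale by $\boldsymbol{v}_n^H:=\boldsymbol{u}_H^n/t_n$ and $v_n^3:=u_3^n/\sqrt{t_n}$. The previous step bounds $v_n^3$ in $H^2(\omega)$ while $\|\boldsymbol{v}_n^H\|_{H^1(\omega)}=1$, and the exponents are tailored so that
\[E_{\alpha\beta}(\boldsymbol{u}^n)=t_n\bigl(e_{\alpha\beta}(\boldsymbol{v}_n^H)+\tfrac12\partial_\alpha v_n^3\,\partial_\beta v_n^3\bigr),\qquad e_{\alpha\beta}(\boldsymbol{w}):=\tfrac12(\partial_\alpha w_\beta+\partial_\beta w_\alpha).\]
From $J(\boldsymbol{u}^n)\le M$ one reads off $\|\boldsymbol{E}(\boldsymbol{u}^n)\|_{\lambda,\mu}^2=O(t_n)$, so $e_{\alpha\beta}(\boldsymbol{v}_n^H)+\tfrac12\partial_\alpha v_n^3\,\partial_\beta v_n^3\to 0$ in $L^2(\omega)$. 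Extracting weak limits $\boldsymbol{v}_n^H\rightharpoonup\boldsymbol{v}^H$ and $v_n^3\rightharpoonup v^3$, the compact embedding $H^2(\omega)\hookrightarrow W^{1,4}(\omega)$ gives strong $L^2$-convergence of $\nabla v_n^3\otimes\nabla v_n^3$, so the limit triple $\boldsymbol{v}:=(\boldsymbol{v}^H,v^3)$ lies in the weakly closed set $V(\omega)$ and satisfies $\boldsymbol{E}(\boldsymbol{v})=\boldsymbol{0}$; the rigidity assumption \eqref{e9} then forces $\boldsymbol{v}=\boldsymbol{0}$.

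With the limit identified as zero, $\boldsymbol{v}_n^H\to\boldsymbol{0}$ strongly in $L^2(\omega)$ by Rellich, and $e_{\alpha\beta}(\boldsymbol{v}_n^H)\to 0$ in $L^2(\omega)$ via the displayed convergence together with $\nabla v_n^3\otimes\nabla v_n^3\to\boldsymbol{0}$ in $L^2(\omega)$. The second Korn inequality on the bounded Lipschitz domain $\omega$, namely $\|\boldsymbol{w}\|_{H^1}\le C(\|\boldsymbol{w}\|_{L^2}+\|\boldsymbol{e}(\boldsymbol{w})\|_{L^2})$, then delivers $\|\boldsymbol{v}_n^H\|_{H^1(\omega)}\to 0$, contradicting $\|\boldsymbol{v}_n^H\|_{H^1(\omega)}=1$. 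Hence $\boldsymbol{u}_H^n$ is bounded in $H^1$, so $u_3^n$ is bounded in $H^2$ by the first step, and extracting a weakly convergent subsequence while invoking Theorem~\ref{t1} and the weak closedness of $V(\omega)$ concludes the argument. I expect the main obstacle to be the tangential bound: the nonlinear term $\partial_\alpha u_3\,\partial_\beta u_3$ in $E_{\alpha\beta}$ resists direct estimation, and the full strength of the rigidity assumption \eqref{e9} is precisely what is needed to annihilate the rescaled weak limit so that the purely linear Korn inequality can close the contradiction.
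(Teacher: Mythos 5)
Your proof is correct, and it rests on the same two pillars as the paper's argument --- the seminorm-to-norm equivalence on $V_3(\omega)$ coming from \eqref{e8}, and the rescaling $(u^n_\alpha/t_n,\,u^n_3/\sqrt{t_n})$ whose weak limit is annihilated by the rigidity assumption \eqref{e9} --- but the architecture is genuinely leaner. The paper proceeds through a four-stage contradiction (its parts (ii)--(v)): it first shows that if no bounded minimizing sequence exists then both $\|(u^n_\alpha)\|_{H^1(\omega)}$ and $\|u^n_3\|_{H^2(\omega)}$ blow up, then derives the intermediate bounds \eqref{e20} and \eqref{e24} (the latter by the same rescaled compactness argument you use), and finally performs a \emph{second} rescaling by $\|u^n_3\|_{H^2(\omega)}$ to reach the contradiction $\|z^n_3\|_{H^2(\omega)}\to 0$. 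You replace all of this with two direct a priori estimates: the absorption bound $\|u^n_3\|_{H^2(\omega)}\le C\bigl(1+\|(u^n_\alpha)\|_{L^2(\omega)}^{1/2}\bigr)$, obtained up front, and then a single compactness argument showing that $\|(u^n_\alpha)\|_{H^1(\omega)}$ is bounded --- the tailored exponents make $\|\boldsymbol{E}(\boldsymbol{u}^n)\|_{L^2(\omega)}=O(t_n^{1/2})$ so the rescaled strain tends to zero, exactly as needed. A second, minor divergence: the paper deduces from \eqref{e9} the infinitesimal rigidity \eqref{e10} of $V_H(\omega)$ and hence the homogeneous Korn inequality \eqref{e11} (no $L^2$ term), which it uses to close the contradiction; you instead use the classical second Korn inequality on all of $H^1(\omega)\times H^1(\omega)$ together with the Rellich convergence $\boldsymbol{v}^H_n\to \boldsymbol{0}$ in $L^2(\omega)$, so that \eqref{e9} is invoked only once, on the full limit triple. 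Both routes are sound; yours is shorter and dispenses with the paper's parts (ii), (iii) and (v) entirely.
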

	
	\begin{proof}
		The proof consists in adapting Rabier's argument to general displacement fields, which may not satisfy the boundary conditions $u_i=\partial_\nu u_3=0$ on $\partial\omega$ used in Rabier \cite{rab}. We divide the proof into six parts, numbered (i) to (vi), for clarity.
		
		\
		
		{\it (i) Three basic observations.} 
		
		The first observation is that the rigidity assumption \eqref{e9} implies that the subspace $V_H(\omega)$ of $H^1(\omega)\times H^1(\omega)$ satisfies the following property: 
		\begin{equation}\label{e10}\aligned{} 
			(u_\alpha) \in V_H(\omega) \textup{ and } \partial_\alpha u_\beta+\partial_\beta u_\alpha =0 \textup{ in } \omega \textup{ imply } u_\alpha=0 \textup{ in } \omega.
			\endaligned\end{equation}
		For, if $(u_\alpha) \in V_H(\omega)$ satisfies $\partial_\alpha u_\beta+\partial_\beta u_\alpha =0$ in $\omega$, then the displacement field ${\boldletter{u}}:=(u_1,u_2,0)$ belongs to $V(\omega)$ and satisfies ${\boldletter{E}}({\boldletter{u}})={\boldletter{0}}$ in $\omega$. Hence the rigidity assumption \eqref{e9} implies that ${\boldletter{u}}={\boldletter{0}}$, or equivalently, that $u_\alpha=0$ in $\omega$. 
		
		Note that property \eqref{e10} of the space $V_H(\omega)$ implies that the semi-norm
		\begin{equation*}
			(u_\alpha) \in H^1(\omega)\times H^1(\omega) \mapsto \sum_{\alpha,\beta} \|\partial_\alpha u_\beta+\partial_\beta u_\alpha\|_{L^2(\omega)}\in{\capital{R}}
		\end{equation*}
		becomes a norm equivalent to the norm of the space $H^1(\omega)\times H^1(\omega)$ when it is restricted to the subspace $V_H(\omega)$ of $H^1(\omega)\times H^1(\omega)$, thanks to the open mapping theorem (see, e.g., Ciarlet \cite{c2025}).
		
		So there exists a constant $C_1\in{\capital{R}}$ such that 
		\begin{equation}\label{e11}\aligned{} 
			\sum_\alpha \|u_\alpha\|_{H^1(\omega)}^2
			& \leq C_1 \sum_{\alpha,\beta} \|\partial_\alpha u_\beta+\partial_\beta u_\alpha\|_{L^2(\omega)}^2 && \text{for all }(u_\alpha)\in V_H(\omega).
			\endaligned\end{equation}
		
		The second observation is that assumption \eqref{e8} of the theorem is equivalent to the assumption that the semi-norm
		\begin{equation*}
			u_3 \in H^2(\omega) \mapsto \sum_{\alpha,\beta} \|\partial_{\alpha\beta} u_3\|_{L^2(\omega)}\in{\capital{R}}
		\end{equation*}
		becomes a norm equivalent to the norm of the space $H^2(\omega)$ when restricted to the subspace $V_3(\omega)$ of $H^2(\omega)$. This equivalence can be proved by using the open mapping theorem (see, e.g., Ciarlet \cite{c2025}). So there exists a constant $C_2\in{\capital{R}}$ such that 
		\begin{equation}\label{e12}\aligned{} 
			\|u_3\|_{H^2(\omega)}^2
			& \leq C_2 \sum_{\alpha,\beta} \|\partial_{\alpha\beta} u_3\|_{L^2(\omega)}^2 && \text{for all } u_3\in V_3(\omega).
			\endaligned\end{equation}
		
		The third observation is that for each minimizing sequence  ${\boldletter{u}}^n$ of the functional $J: V(\omega)\to{\capital{R}}$, there exists an integer $n_0\in{\capital{N}}$ such that, for all $n\geq n_0$,  
		\begin{equation*}
			J({\boldletter{u}}^n)\leq J({\boldletter{0}})=0.
		\end{equation*}
		Therefore,  
		\begin{equation*}
			\frac{\varepsilon}2 \|{\boldletter{E}}({\boldletter{u}}^n)\|_{\lambda,\mu}^2+\frac{\varepsilon^3}6 \|{\boldletter{F}}({\boldletter{u}}^n)\|_{\lambda,\mu}^2
			\leq L({\boldletter{u}}^n)
		\end{equation*}
		for all $n\geq n_0$. Combined with the definition of the norm $\|\cdot\|_{\lambda,\mu}$, this implies that
		\begin{equation}\label{e13}\aligned{} 
			\frac{\mu\varepsilon}8 \|(\partial_\alpha u^n_\beta+\partial_\beta u^n_\alpha+\partial_\alpha u^n_3\partial_\beta u^n_3)\|_{L^2(\omega)}^2+\frac{2\mu\varepsilon^3}3 \|(\partial_{\alpha\beta}u^n_3)\|_{L^2(\omega)}^2\leq L({\boldletter{u}}^n),
			\endaligned\end{equation}
		on the one hand. 
		
		The definition of the linear form $L$ implies that 
		\begin{equation*}\aligned{}
			L({\boldletter{u}}^n)&=\int_{\omega} (p^\varepsilon_i u^n_i- q^\varepsilon_\alpha\partial_\alpha u^n_3) dy
			\leq \sum_\alpha \|p^\varepsilon_\alpha\|_{L^2(\omega)}\|u^n_\alpha\|_{L^2(\omega)}
			\\& 
			+ \Big(\|p^\varepsilon_3\|_{L^2(\omega)}^2+ \sum_\alpha \|p^\varepsilon_\alpha\|_{L^2(\omega)}^2\Big)^{1/2}\|u^n_3\|_{H^1(\omega)},
			\endaligned\end{equation*}
		which combined with inequalities \eqref{e11} and \eqref{e12} next implies that 
		\begin{equation*}\aligned{}
			L({\boldletter{u}}^n)
			& \leq \sqrt{C_1}\|(p^\varepsilon_\alpha)\|_{L^2(\omega)} \|(\partial_\alpha u^n_\beta+\partial_\beta u^n_\alpha)\|_{L^2(\omega)}
			\\
			&+ \sqrt{C_2}\Big(\|p^\varepsilon_3\|_{L^2(\omega)}^2+ \|(p^\varepsilon_\alpha)\|_{L^2(\omega)}^2\Big)^{1/2}\|(\partial_{\alpha\beta}u^n_3)\|_{L^2(\omega)},\\
			& \leq \sqrt{C_1}\|(p^\varepsilon_\alpha)\|_{L^2(\omega)}  \|(\partial_\alpha u^n_\beta+\partial_\beta u^n_\alpha+\partial_\alpha u^n_3\partial_\beta u^n_3)\|_{L^2(\omega)}\\
			&
			+\sqrt{C_1}\|(p^\varepsilon_\alpha)\|_{L^2(\omega)} \|(\partial_\alpha u^n_3\partial_\beta u^n_3)\|_{L^2(\omega)}\\
			&
			+ \sqrt{C_2}\Big(\|p^\varepsilon_3\|_{L^2(\omega)}^2+ \|(p^\varepsilon_\alpha)\|_{L^2(\omega)}^2\Big)^{1/2}\|(\partial_{\alpha\beta}u^n_3)\|_{L^2(\omega)}.
			\endaligned\end{equation*}
		Then
		\begin{equation}\label{e14}\aligned{} 
			L({\boldletter{u}}^n)&\leq 
			\frac{\mu\varepsilon}{16}  \|(\partial_\alpha u^n_\beta+\partial_\beta u^n_\alpha+\partial_\alpha u^n_3\partial_\beta u^n_3)\|_{L^2(\omega)}^2
			+\frac{4C_1}{\mu\varepsilon}\|(p^\varepsilon_\alpha)\|_{L^2(\omega)}^2 
			\\
			&+\sqrt{2C_1}\|(p^\varepsilon_\alpha)\|_{L^2(\omega)} \|(\partial_\alpha u^n_3)\|_{L^4(\omega)}^2\\
			&+\frac{\mu\varepsilon^3}{3}\|(\partial_{\alpha\beta}u^n_3)\|_{L^2(\omega)}^2
			+  \frac{3C_2}{4\mu\varepsilon^3}\Big(\|p^\varepsilon_3\|_{L^2(\omega)}^2+ \|(p^\varepsilon_\alpha)\|_{L^2(\omega)}^2\Big),
			\endaligned\end{equation}
		on the other hand.
		
		Combining inequality \eqref{e14} with inequality \eqref{e13} yields 
		\begin{equation}\label{e15}\aligned{} 
			\frac{\mu\varepsilon}{16} \|(\partial_\alpha u^n_\beta+\partial_\beta u^n_\alpha+\partial_\alpha u^n_3\partial_\beta u^n_3)\|_{L^2(\omega)}^2+\frac{\mu\varepsilon^3}3 \|(\partial_{\alpha\beta}u^n_3)\|_{L^2(\omega)}^2\\
			\leq
			A+B\|(\partial_\alpha u^n_3)\|_{L^4(\omega)}^2,
			\endaligned\end{equation}
		where 
		\begin{equation}\label{e16}\aligned{} 
			A&:=\frac{4C_1}{\mu\varepsilon}\|(p^\varepsilon_\alpha)\|_{L^2(\omega)}^2+ \frac{3C_2}{4\mu\varepsilon^3}\Big(\|p^\varepsilon_3\|_{L^2(\omega)}^2+ \|(p^\varepsilon_\alpha)\|_{L^2(\omega)}^2\Big),\\
			B&:=\sqrt{2C_1}\|(p^\varepsilon_\alpha)\|_{L^2(\omega)}.
			\endaligned\end{equation}
		
		\
		
		{\it (ii) We prove here that if all minimizing sequences of  $J:V(\omega)\to{\capital{R}}$ are unbounded in the space $H^1(\omega)\times H^1(\omega)\times H^2(\omega)$, then all minimizing sequences ${\boldletter{u}}^n=(u^n_i)\in V(\omega)$, $n\in{\capital{N}}$, of $J$ must satisfy  
			\begin{equation*}
				\|(u^n_\alpha)\|_{H^1(\omega)} \to \infty \text{ \ when } n\to\infty,
			\end{equation*}
			and
			\begin{equation*}
				\|u^n_3\|_{H^2(\omega)} \to \infty \text{ \ when } n\to\infty.
			\end{equation*}
		}
		
		To prove this, assume first that there exists a minimizing sequence ${\boldletter{u}}^n$ of $J$ in $V(\omega)$ such that 
		\begin{equation}\label{e17}\aligned{} 
			\sup_{n\in{\capital{N}}} \|(u^n_\alpha)\|_{H^1(\omega)} <\infty.
			\endaligned\end{equation}
		Since ${\boldletter{u}}^n$ is a minimizing sequence, there exists $n_0\in{\capital{N}}$ such that, for all $n\geq n_0$,  
		\begin{equation*}
			J({\boldletter{u}}^n)\leq J({\boldletter{0}})=0.
		\end{equation*}
		Therefore, for all $n\geq n_0$, 
		\begin{equation*}\aligned{}
			& \frac{\varepsilon}2 \|{\boldletter{E}}({\boldletter{u}}^n)\|_{\lambda,\mu}^2+\frac{\varepsilon^3}6 \|{\boldletter{F}}({\boldletter{u}}^n)\|_{\lambda,\mu}^2
			\leq L({\boldletter{u}}^n)=\int_{\omega} (p^\varepsilon_i u^n_i- q^\varepsilon_\alpha\partial_\alpha u^n_3) dy\\
			& \leq \sum_\alpha \|p^\varepsilon_\alpha\|_{L^2(\omega)}\|u^n_\alpha\|_{L^2(\omega)}
			+ \Big(\|p^\varepsilon_3\|_{L^2(\omega)}^2+ \sum_\alpha \|p^\varepsilon_\alpha\|_{L^2(\omega)}^2\Big)^{1/2}\|u^n_3\|_{H^1(\omega)}. 
			\endaligned\end{equation*}
		Using inequality \eqref{e17} in the right-hand side above implies that there exist two constants $A_0$ and $B_0$ independent of $n$ such that, for all $n\geq n_0$,
		\begin{equation*}
			\frac{\varepsilon^3}6 \|{\boldletter{F}}({\boldletter{u}}^n)\|_{\lambda,\mu}^2
			\leq A_0+B_0 \|u^n_3\|_{H^1(\omega)}.
		\end{equation*}
		On the other hand, the definition of the norm $\|\cdot\|_{\lambda,\mu}$ and inequality \eqref{e12} imply that  
		\begin{equation*}
			\|{\boldletter{F}}({\boldletter{u}}^n)\|_{\lambda,\mu}^2 \geq 4\mu \sum_{\alpha,\beta}\|\partial_{\alpha\beta} u^n_3\|_{L^2(\omega)}^2
			\geq \frac{4\mu}{C_2} \|u^n_3\|_{H^2(\omega)}^2,
		\end{equation*}
		where $C_2>0$ is a constant. Combined with the previous inequality, this implies that  
		\begin{equation*}
			\sup_{n\in{\capital{N}}} \|u^n_3\|_{H^2(\omega)} <\infty.
		\end{equation*}
		
		But this inequality, combined with inequality \eqref{e17}, implies that the minimizing sequence ${\boldletter{u}}^n$ is bounded in the space $H^1(\omega)\times H^1(\omega)\times H^2(\omega)$, which contradicts the assumption that such a minimizing sequence does not exist. 
		
		Second, assume that there exists a minimizing sequence ${\boldletter{u}}^n$ of $J$ in $V(\omega)$ such that 
		\begin{equation}\label{e18}\aligned{} 
			\sup_{n\in{\capital{N}}} \|u^n_3\|_{H^2(\omega)} <\infty.
			\endaligned\end{equation}
		Using that $J({\boldletter{u}}^n)\leq 0$, we deduce that (in the same way as above) 
		\begin{equation*}\aligned{}
			& \frac{\varepsilon}2 \|{\boldletter{E}}({\boldletter{u}}^n)\|_{\lambda,\mu}^2+\frac{\varepsilon^3}6 \|{\boldletter{F}}({\boldletter{u}}^n)\|_{\lambda,\mu}^2\\
			& \leq \sum_\alpha \|p^\varepsilon_\alpha\|_{L^2(\omega)}\|u^n_\alpha\|_{L^2(\omega)}
			+ \Big(\|p^\varepsilon_3\|_{L^2(\omega)}^2+ \sum_\alpha \|p^\varepsilon_\alpha\|_{L^2(\omega)}^2\Big)^{1/2}\|u^n_3\|_{H^1(\omega)}, 
			\endaligned\end{equation*}
		then that 
		\begin{equation}\label{e19}\aligned{} 
			\frac{\varepsilon}2 \|{\boldletter{E}}({\boldletter{u}}^n)\|_{\lambda,\mu}^2
			\leq A_1 \sum_\alpha\|u^n_\alpha\|_{L^2(\omega)}+ B_1,
			\endaligned\end{equation}
		for some constants $A_1,B_1\in{\capital{R}}$. 
		
		On the other hand, the definition of the norm $\|\cdot\|_{\lambda,\mu}$ implies  that 
		\begin{equation*}\aligned{}
			\|{\boldletter{E}}({\boldletter{u}}^n)\|_{\lambda,\mu}^2
			&\geq 4 \mu\sum_{\alpha,\beta}\|E_{\alpha\beta}({\boldletter{u}}^n)\|_{L^2(\omega)}^2\\
			& = \mu \sum_{\alpha,\beta}\|\partial_\alpha u^n_\beta+\partial_\beta u^n_\alpha+\partial_\alpha u^n_3\partial_\beta u^n_3\|_{L^2(\omega)}^2\\
			& \geq \frac{\mu}{2} \sum_{\alpha,\beta}\|\partial_\alpha u^n_\beta+\partial_\beta u^n_\alpha\|_{L^2(\omega)}^2
			- \mu\sum_{\alpha,\beta}\|\partial_\alpha u^n_3\partial_\beta u^n_3\|_{L^2(\omega)}^2,
			\endaligned\end{equation*}
		which combined with inequality \eqref{e11} shows that  
		\begin{equation*}
			\|{\boldletter{E}}({\boldletter{u}}^n)\|_{\lambda,\mu}^2 \geq \frac{\mu}{2C_1}\sum_{\alpha} \|u^n_\alpha\|_{H^1(\omega)}^2-\mu\sum_{\alpha,\beta}\|\partial_\alpha u^n_3\partial_\beta u^n_3\|_{L^2(\omega)}^2,
		\end{equation*}
		where $C_1>0$ is a constant. Then we infer from inequality \eqref{e19} that  
		\begin{equation*}
			\frac{\mu\varepsilon}{4C_1}\sum_{\alpha} \|u^n_\alpha\|_{H^1(\omega)}^2
			\leq A_1 \sum_\alpha\|u^n_\alpha\|_{L^2(\omega)}+\Big(B_1+\frac{\mu\varepsilon}{2} \sum_{\alpha,\beta}\|\partial_\alpha u^n_3\partial_\beta u^n_3\|_{L^2(\omega)}^2\Big),  
		\end{equation*}
		which combined with assumption \eqref{e18} next implies that
		\begin{equation*}
			\sup_{n\in{\capital{N}}} \left( \sum_\alpha \|u^n_\alpha\|_{H^1(\omega)} \right)<\infty.
		\end{equation*}
		But this inequality, combined with inequality \eqref{e18}, implies that the minimizing sequence ${\boldletter{u}}^n$ is bounded in the space $H^1(\omega)\times H^1(\omega)\times H^2(\omega)$, which contradicts the assumption that such a minimizing sequence does not exist. 
		
		\
		
		{\it (iii) We prove here that if all minimizing sequences of  $J:V(\omega)\to{\capital{R}}$ are unbounded in the space $H^1(\omega)\times H^1(\omega)\times H^2(\omega)$, then every minimizing sequence ${\boldletter{u}}^n=(u^n_i)\in V(\omega)$, $n\in{\capital{N}}$, of $J$ satisfies the following property: There exist constants $n_0\in{\capital{N}}$ and $B_3\in{\capital{R}}$ such that 
			\begin{equation}\label{e20}\aligned{} 
				\|(\partial_{\alpha\beta}u^n_3)\|_{L^2(\omega)}
				& \leq
				2 B_3\|(\partial_\alpha u^n_3)\|_{L^4(\omega)} \text{ \ for all } n\geq n_0.
				\endaligned
			\end{equation}
		}
		
		We proved in part (ii) of the proof that if the functional $J$ does not possess any minimizing sequence that is bounded in the space $H^1(\omega)\times H^1(\omega)\times H^2(\omega)$, then all minimizing sequences ${\boldletter{u}}^n=(u^n_i)\in V(\omega)$, $n\in{\capital{N}}$, of $J$ must satisfy 
		\begin{equation}\label{e21}\aligned{} 
			&\|(u^n_\alpha)\|_{H^1(\omega)} \to \infty && \text{ \ when } n\to\infty,
			\\
			& \|u^n_3\|_{H^2(\omega)} \to \infty && \text{ \ when } n\to\infty.
			\endaligned\end{equation}
		Besides, as shown by inequality \eqref{e15} established above in part (i) of the proof, they must also satisfy, for all $n\in{\capital{N}}$, 
		\begin{equation}\label{e22}\aligned{} 
			\|(\partial_\alpha u^n_\beta+\partial_\beta u^n_\alpha+\partial_\alpha u^n_3\partial_\beta u^n_3)\|_{L^2(\omega)}
			& \leq
			A_2+B_2\|(\partial_\alpha u^n_3)\|_{L^4(\omega)},\\
			\|(\partial_{\alpha\beta}u^n_3)\|_{L^2(\omega)}
			& \leq
			A_3+B_3\|(\partial_\alpha u^n_3)\|_{L^4(\omega)},
			\endaligned\end{equation}
		for some constants $A_2,B_2,A_3,B_3$ defined in terms of $\mu$, $\varepsilon$, and of the constants $A$ and $B$ given by \eqref{e16}.
		
		Using inequality \eqref{e12} and the last convergence in \eqref{e21}, we deduce from the last inequality of \eqref{e22} that  
		\begin{equation*}
			A_3+B_3\|(\partial_\alpha u^n_3)\|_{L^4(\omega)}\geq \frac{1}{\sqrt{C_2}}\|u^n_3\|_{H^2(\omega)}\to \infty \text{ \ when } n\to\infty.
		\end{equation*}
		Hence 
		\begin{equation}\label{e23}\aligned{} 
			\|(\partial_\alpha u^n_3)\|_{L^4(\omega)}\to \infty \text{ \ when } n\to\infty,
			\endaligned\end{equation}
		which combined with the second inequality of \eqref{e22} shows that there exists an integer $n_0\in{\capital{N}}$ such that  
		\begin{equation*}
			\|(\partial_{\alpha\beta}u^n_3)\|_{L^2(\omega)}\leq
			2 B_3\|(\partial_\alpha u^n_3)\|_{L^4(\omega)} \text{ \ for all } n\geq n_0.
		\end{equation*}
		This is precisely inequality \eqref{e20}, which is thus proved.

		\
		
		{\it (iv) We prove here that if all minimizing sequences of  $J:V(\omega)\to{\capital{R}}$ are unbounded in the space $H^1(\omega)\times H^1(\omega)\times H^2(\omega)$,  then every minimizing sequence ${\boldletter{u}}^n=(u^n_i)\in V(\omega)$, $n\in{\capital{N}}$, of $J$ satisfies the following property: There exist a constant $C_3\in {\capital{R}}$ such that 
			\begin{equation}\label{e24}\aligned{} 
				\|(u^n_\alpha)\|_{H^1(\omega)}\leq C_3\|(\partial_\alpha u^n_\beta+\partial_\beta u^n_\alpha+\partial_\alpha u^n_3\partial_\beta u^n_3)\|_{L^2(\omega)}  \text{ \ for all } n\in{\capital{N}}.
				\endaligned\end{equation}
		}
		
		Assume for the sake of contradiction that this is false. Then there exists a minimizing sequence ${\boldletter{u}}^n=(u^n_i)\in V(\omega)$, $n\in{\capital{N}}$, of $J$ and a strictly increasing function $\sigma:{\capital{N}}\to{\capital{N}}$ such that  
		\begin{equation*}
			\|(u^{\sigma(k)}_\alpha)\|_{H^1(\omega)}> k \|(\partial_\alpha u^{\sigma(k)}_\beta+\partial_\beta u^{\sigma(k)}_\alpha+\partial_\alpha u^{\sigma(k)}_3\partial_\beta u^{\sigma(k)}_3)\|_{L^2(\omega)}
		\end{equation*}
		for all $k\in{\capital{N}}$. Define  
		\begin{equation*}
			w^{\sigma(k)}_\alpha:=\frac{u^{\sigma(k)}_\alpha}{\|(u^{\sigma(k)}_\alpha)\|_{H^1(\omega)}}
			\text{ \ and \ } 
			w^{\sigma(k)}_3:=\frac{u^{\sigma(k)}_3}{\|(u^{\sigma(k)}_\alpha)\|_{H^1(\omega)}^{1/2}}
		\end{equation*}
		for all $k\in {\capital{N}}$. Then
		\begin{equation}\label{e25}\aligned{} 
			\left\|(\partial_\alpha w^{\sigma(k)}_\beta+\partial_\beta w^{\sigma(k)}_\alpha+\partial_\alpha w^{\sigma(k)}_3\partial_\beta w^{\sigma(k)}_3)\right\|_{L^2(\omega)} & < \frac1k &&\text{ for all }k\in{\capital{N}},\\
			\|(w^{\sigma(k)}_\alpha)\|_{H^1(\omega)}& =1 &&\text{ for all }k\in{\capital{N}},
			\endaligned\end{equation}
		and, in view of inequality \eqref{e20},
		\begin{equation}\label{e26}\aligned{} 
			\|(\partial_{\alpha\beta}w^{\sigma(k)}_3)\|_{L^2(\omega)}
			& \leq
			2 B_3\left\|(\partial_\alpha w^{\sigma(k)}_3)\right\|_{L^4(\omega)} \text{ \ for all } k\geq n_0.
			\endaligned\end{equation}
		
		Next, relations \eqref{e25} and the triangle inequality imply that  
		\begin{equation*}
			\left\|(\partial_\alpha w^{\sigma(k)}_3\partial_\beta w^{\sigma(k)}_3)\right\|_{L^2(\omega)}
			\leq \frac1k+2\leq 3 \text{ \ for all }k\geq 1,
		\end{equation*}
		which in turn implies that  
		\begin{equation*}
			\left\|(\partial_\alpha w^{\sigma(k)}_3)\right\|_{L^4(\omega)}
			\leq \sqrt3 \text{ \ for all }k\geq 1,
		\end{equation*}
		Then we infer from inequality \eqref{e26} that 
		\begin{equation}\label{e27}\aligned{} 
			\|(\partial_{\alpha\beta}w^{\sigma(k)}_3)\|_{L^2(\omega)}
			& \leq
			2 \sqrt3 B_3 \text{ \ for all } k\geq n_0,
			\endaligned\end{equation}
		which proves that the sequence $(w^{\sigma(k)}_3)_{k\in{\capital{N}}}$ is bounded in $H^2(\omega)$ thanks to inequality \eqref{e12}. 
		
		Since the sequence $(w^{\sigma(k)}_\alpha)_{k\in{\capital{N}}}$ is also bounded in $H^1(\omega)\times H^1(\omega)$ thanks to the second inequality of  \eqref{e25}, and since the subspaces $V_H(\omega)$ and $V_3(\omega)$ are closed respectively in $H^1(\omega)\times H^1(\omega)$ and $H^2(\omega)$ by the assumptions of the theorem, there exists a strictly increasing function $\psi: {\capital{N}}\to {\capital{N}}$ and two elements $(w_\alpha)\in V_H(\omega)$ and $w_3\in V_3(\omega)$ such that 
		\begin{equation*}\aligned{}
			w^{\sigma(\psi(p))}_\alpha & \rightharpoonup w_\alpha && \text{ in } H^1(\omega) \text{ \ when } p\to\infty,\\
			w^{\sigma(\psi(p))}_3 & \rightharpoonup w_3 && \text{ in } H^2(\omega) \text{ \ when } p\to\infty,
			\endaligned\end{equation*}
		where $\rightharpoonup$ denotes the weak convergence. Since the embeddings $H^1(\omega)\subset  L^4(\omega)$ and $H^1(\omega)\subset  L^2(\omega)$ are compact thanks to the assumptions on the set $\omega$ (see, e.g., Adams \& Fournier \cite{ada}), we also have 
		\begin{equation}\label{e28}\aligned{} 
			w^{\sigma(\psi(p))}_\alpha & \to w_\alpha && \text{ in } L^2(\omega) \text{ \ when } p\to\infty,\\
			\partial_\alpha w^{\sigma(\psi(p))}_3 & \to \partial_\alpha w_3 && \text{ in } L^4(\omega) \text{ \ when } p\to\infty,
			\endaligned\end{equation}
		where $\to$ denotes the strong convergence. 
		
		Consequently,  
		\begin{equation*}
			\partial_\alpha w^{\sigma(\psi(p))}_\beta+\partial_\beta w^{\sigma(\psi(p))}_\alpha+\partial_\alpha w^{\sigma(\psi(p))}_3\partial_\beta w^{\sigma(\psi(p))}_3\rightharpoonup \partial_\alpha w_\beta+\partial_\beta w_\alpha+\partial_\alpha w_3\partial_\beta w_3
		\end{equation*}
		in $L^2(\omega)$ when $p\to\infty$. Then we infer from inequality \eqref{e25} that  
		\begin{equation*}
			\|\partial_\alpha w_\beta+\partial_\beta w_\alpha+\partial_\alpha w_3\partial_\beta w_3\|_{L^2(\omega)}
			\leq \liminf_{p\to\infty}\frac1{\psi(p)}=0,
		\end{equation*}
		which implies that  
		\begin{equation*}
			w_\alpha=w_3=0,  
		\end{equation*}
		thanks to the rigidity assumption \eqref{e9} of the theorem. Using now the strong convergence $\partial_\alpha w^{\sigma(\psi(p))}_3 \to \partial_\alpha w_3=0$ in $L^4(\omega)$ established in \eqref{e28}, we deduce from inequality \eqref{e25} that 
		\begin{equation}\label{e29}\aligned{} 
			\left\|(\partial_\alpha w^{\sigma(\psi(p))}_\beta+\partial_\beta w^{\sigma(\psi(p))}_\alpha)\right\|_{L^2(\omega)} 
			& \leq 
			\frac1{\psi(p)}+
			\left\|(\partial_\alpha w^{\sigma(\psi(p))}_3\partial_\beta w^{\sigma(\psi(p))}_3)\right\|_{L^2(\omega)} \\
			& \leq 
			\frac1{\psi(p)}+
			2 \left\|(\partial_\alpha w^{\sigma(\psi(p))}_3)\right\|_{L^4(\omega)}^2\\
			&\to 0 \text{ \ when  } p\to\infty. 
			\endaligned\end{equation}
		Then inequality \eqref{e11} implies that  
		\begin{equation*}
			w^{\sigma(\psi(p)}\to 0 \text{ \ in } H^1(\omega) \text{ \ when  } p\to\infty,  
		\end{equation*}
		or equivalently, that  
		\begin{equation*}
			\|(w^{\sigma(\psi(p))}_\alpha)\|_{H^1(\omega)}\to 0  \text{ \ when  } p\to\infty.
		\end{equation*}
		This contradicts the second relation of \eqref{e25}, according to which \begin{equation*}\aligned{}
			\|(w^{\sigma(\psi(p))}_\alpha)\|_{H^1(\omega)}& =1 &&\text{ for all }p\in{\capital{N}}.
			\endaligned\end{equation*}
		This completes the proof by contradiction of inequality \eqref{e24}.
		
		\
		
		{\it (v) We prove here that the functional $J:V(\omega)\to{\capital{R}}$ possesses a sequence of minimizers that is bounded in the space $H^1(\omega)\times H^1(\omega)\times H^2(\omega)$.}
		
		Assume for the sake of contradiction that this is false. Then all minimizing sequences of $J$ are unbounded in the space $H^1(\omega)\times H^1(\omega)\times H^2(\omega)$. Let ${\boldletter{u}}^n=(u^n_i)\in V(\omega)$, $n\in{\capital{N}}$, be a minimizing sequence of $J$. We have established in part (ii) of the proof that 
		\begin{equation}\label{e30}\aligned{} 
			\left\|(u^n_\alpha)\right\|_{H^1(\omega)} & \to \infty \text{ \ when } n\to\infty,\\
			\left\|u^n_3\right\|_{H^2(\omega)} & \to \infty \text{ \ when } n\to\infty,
			\endaligned\end{equation}
		we have proved in part (iii) that (see \eqref{e20})
		\begin{equation*}\aligned{} 
			\|(\partial_{\alpha\beta}u^n_3)\|_{L^2(\omega)}
			& \leq
			2 B_3\|(\partial_\alpha u^n_3)\|_{L^4(\omega)} \text{ \ for all } n\geq n_0
			\endaligned\end{equation*}
		for some constants $n_0\in{\capital{N}}$ and $B_3\in {\capital{R}}$, and we have proved in parts (i) and (iv) that,  for all $n\in{\capital{N}}$ (see \eqref{e15} and \eqref{e24}), 
		\begin{equation}\label{e31}\aligned{} 
			\frac{\mu\varepsilon}{16} \|(\partial_\alpha u^n_\beta+\partial_\beta u^n_\alpha+\partial_\alpha u^n_3\partial_\beta u^n_3)\|_{L^2(\omega)}^2
			\leq
			A+B\|(\partial_\alpha u^n_3)\|_{L^4(\omega)}^2,
			\endaligned\end{equation}
		and
		\begin{equation*}\aligned{}
			\|(u^n_\alpha)\|_{H^1(\omega)}\leq C_3\|(\partial_\alpha u^n_\beta+\partial_\beta u^n_\alpha+\partial_\alpha u^n_3\partial_\beta u^n_3)\|_{L^2(\omega)}
			\endaligned\end{equation*}
		for some constants $A,B,C_3\in {\capital{R}}$ . Note that the last two inequalities together imply that  
		\begin{equation*}
			\|(u^n_\alpha)\|_{H^1(\omega)}\leq A_4+B_4\|(\partial_\alpha u^n_3)\|_{L^4(\omega)}  \text{ \ for all } n\in{\capital{N}},
		\end{equation*}
		for some constants $A_4,B_4\in {\capital{R}}$.
		
		In view of \eqref{e30}, we may assume that $\|u^n_3\|_{H^2(\omega)}\geq 1$ for all $n\in{\capital{N}}$. Then, for each $n\in{\capital{N}}$, we define  
		\begin{equation*}
			z^n_\alpha:=\frac{u^n_\alpha}{\|u^n_3\|_{H^2(\omega)}} 
			\text{ \ and } 
			z^n_3:=\frac{u^n_3}{\|u^n_3\|_{H^2(\omega)}}.  
		\end{equation*}
		Then 
		\begin{equation}\label{e32}\aligned{} 
			\|z^n_3\|_{H^2(\omega)}=1 \text{ \ for all } n\in{\capital{N}},
			\endaligned\end{equation}
		and
		\begin{equation}\label{e33}\aligned{} 
			\|(\partial_{\alpha\beta}z^n_3)\|_{L^2(\omega)}
			& \leq
			2 B_3\|(\partial_\alpha z^n_3)\|_{L^4(\omega)} \text{ \ for all } n\geq n_0.
			\endaligned\end{equation}
		Moreover, 
		\begin{equation*}\aligned{}
			\|(z^n_\alpha)\|_{H^1(\omega)}
			& \leq \frac{A_4}{\|u^n_3\|_{H^2(\omega)}} +B_4\|(\partial_\alpha z^n_3)\|_{L^4(\omega)}\\
			& \leq A_4 +B_4\|(\partial_\alpha z^n_3)\|_{L^4(\omega)}
			\text{ \ for all } n\in{\capital{N}},
			\endaligned\end{equation*}
		which, using that the embedding $H^1(\omega)\subset L^4(\omega)$ is continuous and that the sequence $(z^n_3)$ is bounded in $H^2(\omega)$ (see \eqref{e32}), next implies that 
		\begin{equation}\label{e34}\aligned{} 
			\|(z^n_\alpha)\|_{H^1(\omega)}
			& \leq A_4 +B_5\|z^n_3\|_{H^2(\omega)}=A_4+B_5
			\text{ \ for all } n\in{\capital{N}}.
			\endaligned\end{equation}
		Thus we have proved that the sequence $(z^n_3)_{n\in{\capital{N}}}$ is bounded in $H^2(\omega)$ and that the sequences $(z^n_\alpha)_{n\in{\capital{N}}}$ are bounded in $H^1(\omega)$. 
		
		Furthermore, inequality \eqref{e31} implies that 
		\begin{equation*}\aligned{}
			\|(\partial_\alpha u^n_3\partial_\beta u^n_3)\|_{L^2(\omega)}
			& \leq
			\sqrt{\frac{16A}{\mu\varepsilon}}+ \sqrt{\frac{16B}{\mu\varepsilon}}\|(\partial_\alpha u^n_3)\|_{L^4(\omega)}\\
			& + \|(\partial_\alpha u^n_\beta+\partial_\beta u^n_\alpha)\|_{L^2(\omega)},
			\endaligned\end{equation*}
		which in turn implies that 
		\begin{equation*}\aligned{}
			\|u^n_3\|_{H^2(\omega)}\|(\partial_\alpha z^n_3)\|_{L^4(\omega)}^2
			& \leq
			\frac{A_5}{\|u^n_3\|_{H^2(\omega)}}+ B_6\|(\partial_\alpha z^n_3)\|_{L^4(\omega)}\\
			&+ \|(\partial_\alpha z^n_\beta+\partial_\beta z^n_\alpha)\|_{L^2(\omega)},
			\endaligned\end{equation*}
		for some constants $A_5\in{\capital{R}}$ and $B_6\in{\capital{R}}$.  Since $\|u^n_3\|_{H^2(\omega)}\to \infty$ when $n\to\infty$ and the sequences $(\partial_\alpha z^n_3)_{n\in{\capital{N}}}$ and $(\partial_\alpha z^n_\beta)_{n\in{\capital{N}}}$ are bounded respectively in $L^4(\omega)$ and $L^2(\omega)$, the above inequality implies that  
		\begin{equation*}
			\|(\partial_\alpha z^n_3)\|_{L^4(\omega)}^2 \to 0 \text{ \ when } n\to\infty.
		\end{equation*}
		Then inequality \eqref{e33} implies that 
		\begin{equation*}\aligned{}
			\|(\partial_{\alpha\beta}z^n_3)\|_{L^2(\omega)}
			\to 0 \text{ \ when } n\to\infty,
			\endaligned\end{equation*}
		which combined with inequality \eqref{e12} implies that 
		\begin{equation}\label{e35}\aligned{} 
			\|z^n_3\|_{H^2(\omega)}
			\to 0 \text{ \ when } n\to\infty.
			\endaligned\end{equation}
		
		But this contradicts the relation $\|z^n_3\|_{H^2(\omega)}=1$ for all $n\in{\capital{N}}$ (see \eqref{e32}). 
		Thus, the functional $J:V(\omega)\to{\capital{R}}$ does possess a sequence of minimizers that is bounded in the space $H^1(\omega)\times H^1(\omega)\times H^2(\omega)$. 
		
		\
		
		{\it (vi) We prove here that the functional $J$ defined by \eqref{e1} has a minimizer in the set $V(\omega)$.} 
		
		Let ${\boldletter{u}}^n=(u^n_i)\in V(\omega)=V_H(\omega)\times V_3(\omega)$, $n\in{\capital{N}}$, be a minimizing sequence of $J$ that is bounded in the space $H^1(\omega)\times H^1(\omega)\times H^2(\omega)$ (the existence of such a sequence has been proved in part (v) above).
		
		Since $H^1(\omega)\times H^1(\omega)$ and $H^2(\omega)$ are Hilbert spaces and $V_H(\omega)\subset H^1(\omega)\times H^1(\omega)$ and $ V_3(\omega)\subset H^2(\omega)$ are closed subspaces, there exists a strictly increasing function $\sigma:{\capital{N}}\to{\capital{N}}$ and an element ${\boldletter{u}}=(u_i)\in V(\omega)$ such that 
		\begin{equation*}\aligned{} 
			u^{\sigma(k)}_\alpha&\rightharpoonup u_\alpha \text{ in } H^1(\omega) \text{ when } k\to\infty,\\
			u^{\sigma(k)}_3&\rightharpoonup u_3 \text{ in } H^2(\omega) \text{ when } k\to\infty.
			\endaligned\end{equation*}
		The functional $J:H^1(\omega)\times H^1(\omega)\times H^2(\omega)\to{\capital{R}}$ being sequentially weakly lower semicontinuous (see Theorem~\ref{t1}), we have  
		\begin{equation*}
			J({\boldletter{u}})\leq \liminf_{n\to\infty} J({\boldletter{u}}^n)=\inf_{{\boldletter{w}}\in V(\omega)} J({\boldletter{w}}).
		\end{equation*}
		This completes the proof of the theorem.
	\end{proof}

	Theorem \ref{t2} proves that the two-dimensional model of a nonlinearly elastic plate has solutions if the surface is rigid (in the sense of the uniqueness property \eqref{e7}). Thus, to prove an existence theorem for a plate under a specific set of data, it suffices to study its rigidity. 
	
	In the next two sections, we provide two different types of sufficient conditions guaranteeing the rigidity of a plate: first we consider the case of boundary conditions imposed on the normal component of the displacement field (Section \ref{s4}), then we consider the case of boundary conditions imposed on the tangential components of the displacement field (Section \ref{s5}). Note that these sufficient conditions might not be necessary, but they are general enough to be useful in most practical situations.

	%%%%%%%%%%%%%%%%%%%%%%%%%%%
	\section{A first type of rigidity theorem}
	\label{s4}
	
	In this section, we study the rigidity of plates subjected to boundary conditions of place on its transverse displacement fields $u_3:\omega\to{\capital{R}}$. We prove that such a plate is rigid by first using the implication  
	\begin{equation*}
		\partial_\alpha u_\beta+\partial_\beta u_\alpha+\partial_\alpha u_3\partial_\beta u_3=0 \text{ in } \omega
		\Rightarrow
		\det (\partial_{\alpha\beta}u_3)=0 \text{ in } \omega
	\end{equation*}
	satisfied by all vector fields $(u_i)\in H^1(\omega)\times H^1(\omega)\times H^2(\omega)$, then by using boundary conditions on $u_3$ to deduce that $u_3$ must vanish in $\omega$. 
	
	It is already known since Rabier \cite{rab} that the above implications hold for totally clamped plates, that is, when the admissible displacement fields $(u_i)\in H^1(\omega)\times H^1(\omega)\times H^2(\omega)$ satisfy the boundary conditions  
	\begin{equation*}
		u_i=\partial_\nu u_3=0 \text{ \ on } \partial\omega.
	\end{equation*}
	In this section, we complete Rabier's result by showing that if the set $\omega$ is convex, then it suffices to assume that the admissible displacements only satisfy the boundary conditions  
	\begin{equation*}
		u_3=0 \text{ \ on } \partial\omega
	\end{equation*}
	and the uniqueness property  
	\begin{equation*}
		\partial_\alpha u_\beta+\partial_\beta u_\alpha =0 \textup{ in } \omega \textup{ imply } u_\alpha=0 \textup{ in } \omega.
	\end{equation*}
	
	The main result of this section is Theorem \ref{t3}. The proof of this theorem relies on two preliminary results, established below in Propositions \ref{p1} and \ref{p3}. The proof of Proposition \ref{p3}, which is a generalization of a well-known result stated in Proposition \ref{p2}, is itself based on Lemmas \ref{l1}, \ref{l2} and \ref{l3}. 
	
	We begin by proving the following proposition, which will be needed in the proof of  Theorem \ref{t3}.

	\begin{proposition}
		\label{p1}
		Let $\omega\subset {\capital{R}}^2$ be an open set. Then a vector field $(u_i)\in H^1(\omega)\times H^1(\omega)\times H^2(\omega)$ satisfies  
		\begin{equation*}
			\partial_\alpha u_\beta+\partial_\beta u_\alpha+\partial_\alpha u_3\partial_\beta u_3=0 \textup{ \ in } \omega
		\end{equation*}
		only if  
		\begin{equation*}
			\det (\partial_{\alpha\beta}u_3)=0 \textup{ \ in } \omega.
		\end{equation*}
	\end{proposition}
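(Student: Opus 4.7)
The plan is to establish the distributional identity
\begin{equation*}
\det (\partial_{\alpha\beta} u_3) = 2\partial_{12} E_{12}(\boldletter{u}) - \partial_{11} E_{22}(\boldletter{u}) - \partial_{22} E_{11}(\boldletter{u}) \quad \textrm{in } \mathcal{D}'(\omega),
\end{equation*}
valid for every $\boldletter{u}=(u_i) \in H^1(\omega) \times H^1(\omega) \times H^2(\omega)$, where $E_{\alpha\beta}(\boldletter{u})$ is defined by \eqref{e3}. Once this identity is in hand, the hypothesis $2E_{\alpha\beta}(\boldletter{u})=\partial_\alpha u_\beta+\partial_\beta u_\alpha+\partial_\alpha u_3\partial_\beta u_3 = 0$ in $\omega$ immediately forces the right-hand side to vanish in $\mathcal{D}'(\omega)$. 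Since $\partial_{\alpha\beta} u_3 \in L^2(\omega)$, the determinant $\det(\partial_{\alpha\beta} u_3)$ belongs to $L^1(\omega)$, so the distributional identity $\det (\partial_{\alpha\beta} u_3)=0$ will yield $\det (\partial_{\alpha\beta} u_3) = 0$ almost everywhere in $\omega$, which is the desired conclusion.

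First I would verify the identity for smooth $\boldletter{u}$ by direct calculation. Expanding $2\partial_{12} E_{12}(\boldletter{u}) - \partial_{11} E_{22}(\boldletter{u}) - \partial_{22} E_{11}(\boldletter{u})$ via \eqref{e3}, the contributions that are linear in $u_1$ and $u_2$ collapse to $\partial_{112} u_2 + \partial_{122} u_1 - \partial_{112} u_2 - \partial_{122} u_1 = 0$, which is the two-dimensional Saint-Venant compatibility identity for the linearized strain. The remaining quadratic contribution is
\begin{equation*}
\partial_{12}(\partial_1 u_3 \partial_2 u_3) - \tfrac{1}{2}\partial_{11}(\partial_2 u_3)^2 - \tfrac{1}{2}\partial_{22}(\partial_1 u_3)^2,
\end{equation*}
and a short expansion of each term shows that this equals $\partial_{11} u_3 \partial_{22} u_3 - (\partial_{12} u_3)^2 = \det(\partial_{\alpha\beta} u_3)$; this is the classical ``Monge--Amp\`ere divergence identity''.

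Next I would extend the identity from $C^\infty$ to $H^1 \times H^1 \times H^2$ by density, working locally since $\omega$ is only assumed open. Given $\boldletter{u}$ with the stated regularity and a relatively compact open subset $\omega' \Subset \omega$ with Lipschitz boundary (for instance a ball), approximate $u_\alpha$ and $u_3$ by standard mollifications $u_\alpha^n, u_3^n$ converging to them in $H^1(\omega')$ and $H^2(\omega')$ respectively. The two-dimensional Sobolev embedding $H^1(\omega') \hookrightarrow L^4(\omega')$ gives $\partial_\gamma u_3^n \to \partial_\gamma u_3$ in $L^4(\omega')$, whence $(\partial_\alpha u_3^n)(\partial_\beta u_3^n) \to (\partial_\alpha u_3)(\partial_\beta u_3)$ in $L^2(\omega')$ and $\det(\partial_{\alpha\beta} u_3^n) \to \det(\partial_{\alpha\beta} u_3)$ in $L^1(\omega')$. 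Taking two distributional derivatives preserves these convergences in $\mathcal{D}'(\omega')$, so the identity, already verified pointwise for each $\boldletter{u}^n$, passes to the limit and holds in $\mathcal{D}'(\omega')$; since $\omega'$ was arbitrary, it holds in $\mathcal{D}'(\omega)$.

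The main (mild) obstacle is this approximation step: one must ensure that the quadratic term $\partial_\alpha u_3 \partial_\beta u_3$ is well-defined as an $L^2$-function and depends continuously on $u_3 \in H^2$ under mollification, which is precisely what the two-dimensional embedding $H^1 \hookrightarrow L^4$ provides. Everything else reduces to the two algebraic identities stated above, and no further analysis is required.
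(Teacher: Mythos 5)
Your proof is correct and follows essentially the same route as the paper's: both apply the Saint-Venant operator $2\partial_{12}E_{12}-\partial_{11}E_{22}-\partial_{22}E_{11}$ to $\boldsymbol{E}(\boldsymbol{u})$, kill the linear part by symmetry of mixed distributional derivatives, and identify the quadratic part with $\det(\partial_{\alpha\beta}u_3)$ via the Monge--Amp\`ere divergence identity. The only difference is that you justify the formal expansion (which involves third derivatives of $u_3$ that cancel) by local mollification and the embedding $H^1\hookrightarrow L^4$, a step the paper carries out only formally, so your write-up is if anything slightly more careful.
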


	\begin{proof}
		This lemma is a particular case of the nonlinear Saint-Venant compatibility conditions established in Ciarlet \& S. Mardare \cite[Theorem 4.1]{cs2013}, but can be established here in a simpler way by using the symmetry of higher order derivatives of distributions. More specifically, if $(u_i)\in H^1(\omega)\times H^1(\omega)\times H^2(\omega)$ satisfies  
		\begin{equation*}
			\partial_\alpha u_\beta+\partial_\beta u_\alpha+\partial_\alpha u_3\partial_\beta u_3=0 \textup{ \ in } \omega,
		\end{equation*}
		which is equivalent to 
		\begin{equation*}\aligned{}
			2\partial_1u_1&=-(\partial_1u_3)^2  \textup{ \ in } \omega, \\
			2\partial_2u_2&=-(\partial_2u_3)^2 \textup{ \ in } \omega, \\
			\partial_1u_2+\partial_2u_1&=-\partial_1u_3\partial_2u_3 \textup{ \ in } \omega,
			\endaligned\end{equation*}
		then
		\begin{equation*}\aligned{}
			0&=\partial_{11}(\partial_2u_2)+\partial_{22}(\partial_1u_1)-\partial_{12}(\partial_1u_2+\partial_2u_1)\\
			&=\partial_{12}(\partial_1u_3\partial_2u_3)-\frac12\partial_{11}((\partial_2u_3)^2)-\frac12\partial_{22}((\partial_1u_3)^2)\\
			&=\partial_{121}u_3\partial_2u_3+\partial_{21}u_3\partial_{12}u_3+\partial_{11}u_3\partial_{22}u_3+\partial_{1}u_3\partial_{122}u_3\\
			& -\partial_{112}u_3\partial_2u_3-\partial_{12}u_3\partial_{12}u_3-\partial_{221}u_3\partial_1u_3-\partial_{21}u_3\partial_{21}u_3
			\\
			&=\partial_{11}u_3\partial_{22}u_3-(\partial_{12}u_3)^2
			=\det (\partial_{\alpha\beta}u_3) \text{ \ in } \omega.
			\endaligned\end{equation*}
	\end{proof}

	The next result is well known (see, e.g., Rabier \cite{rab}), but we record it here with its proof as it provides the idea to proving Proposition \ref{t3}.

	\begin{proposition}
		\label{p2}
		Let $\omega\subset {\capital{R}}^2$ be an open set such that $\omega\neq {\capital{R}}^2$. Then a function $u_3\in H^2_0(\omega)$ satisfies  
		\begin{equation*}
			\det (\partial_{\alpha\beta}u_3)=0 \textup{ \ in } \omega
		\end{equation*}
		only if  
		\begin{equation*}
			u_3=0 \textup{ \ in } \omega.
		\end{equation*}
	\end{proposition}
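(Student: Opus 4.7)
The strategy is to convert the hypothesis into a global Monge--Amp\`ere identity on $\mathbb{R}^2$ by extending $u_3$ trivially by zero outside $\omega$, and then to invoke a classical rigidity result for the degenerate Monge--Amp\`ere equation $\det D^2 v = 0$.

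First I would observe that since $u_3 \in H^2_0(\omega)$, the trivial extension $\tilde u_3$ (equal to $u_3$ on $\omega$ and to $0$ outside) belongs to $H^2(\mathbb{R}^2)$: approximating $u_3$ in $H^2(\omega)$ by a sequence $\varphi_n \in C_c^\infty(\omega)$, the zero-extensions $\tilde\varphi_n \in C_c^\infty(\mathbb{R}^2)$ are Cauchy in $H^2(\mathbb{R}^2)$ with limit $\tilde u_3$. Moreover, $\tilde u_3$ and all its distributional derivatives of order at most two vanish almost everywhere on $\mathbb{R}^2 \setminus \omega$. Combined with the hypothesis $\det(\partial_{\alpha\beta} u_3) = 0$ in $\omega$, this upgrades to $\det(\partial_{\alpha\beta} \tilde u_3) = 0$ almost everywhere on $\mathbb{R}^2$. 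Since $\omega \neq \mathbb{R}^2$, the extension $\tilde u_3$ also vanishes on a nonempty set, and tends to zero at infinity because it is the $H^2$-limit of compactly supported smooth functions.

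The crux is then the following rigidity claim: any $v \in H^2(\mathbb{R}^2)$ satisfying $\det D^2 v = 0$ almost everywhere and vanishing on a nonempty set (with $v$ and $\nabla v$ decaying at infinity) must be identically zero. Geometrically, the graph of $v$ has vanishing Gauss curvature, so it is developable --- locally ruled by straight segments along which $\nabla v$ is constant. Since $\nabla v$ vanishes on a nonempty portion of $\mathbb{R}^2 \setminus \omega$ and at infinity, every ruling line through an arbitrary point of $\mathbb{R}^2$ must traverse this zero region, forcing $\nabla v \equiv 0$ and consequently $v \equiv 0$. At the analytic level, the argument can be pushed through by combining the Sobolev embedding $H^2(\mathbb{R}^2) \hookrightarrow C^1(\mathbb{R}^2)$ with the divergence-form representation
\[
\det D^2 v = \tfrac{1}{2}\,\partial_\alpha\!\bigl((\operatorname{cof} D^2 v)_{\alpha\beta}\,\partial_\beta v\bigr),
\]
valid distributionally for $v \in H^2(\mathbb{R}^2)$, together with suitable test-function manipulations exploiting the vanishing of $v$ outside $\omega$.

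The main obstacle I expect is precisely the rigidity step: implementing the Hartman--Nirenberg-type developability analysis at the level of $H^2$ regularity, where pointwise rulings must be replaced by Sobolev-compatible analogues and one must justify that the ``flat'' region where $v = 0$ propagates through the weak foliation. For this step I would appeal directly to the argument of Rabier \cite{rab}, which addresses exactly this situation for $H^2_0$ functions and underpins all known proofs of this well-known Monge--Amp\`ere rigidity statement.
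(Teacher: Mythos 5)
Your proposal does not contain a proof: the step you yourself identify as the crux --- that an $H^2(\mathbb{R}^2)$ function with $\det D^2 v=0$ almost everywhere and vanishing ``somewhere'' must be identically zero --- is left unproved and deferred to ``the argument of Rabier''. Since the proposition being proved is precisely the result attributed to Rabier, this is circular; the reduction to a global Monge--Amp\`ere rigidity statement on $\mathbb{R}^2$ replaces the problem by one that is at least as hard (in fact harder, since you lose the boundary conditions that make the problem tractable). Several of the supporting assertions are also wrong or insufficient. The embedding $H^2(\mathbb{R}^2)\hookrightarrow C^1(\mathbb{R}^2)$ fails ($H^s\hookrightarrow C^1$ in dimension $2$ requires $s>2$), so you do not have pointwise control of $\nabla v$, let alone pointwise decay at infinity. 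The developability of $W^{2,2}$ solutions of $\det D^2v=0$ (the Sobolev version of Hartman--Nirenberg) is a genuinely deep theorem of Pakzad and Liu--Pakzad, wildly disproportionate to the statement at hand, and even granting it, your propagation argument needs the zero set of $\tilde u_3$ to be substantial, whereas $\omega\neq\mathbb{R}^2$ only guarantees that $\mathbb{R}^2\setminus\omega$ is nonempty --- it may well be a single point, hence Lebesgue-negligible and invisible to an ``a.e.'' foliation argument.

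For comparison, the paper's proof is entirely elementary and self-contained: for $f\in\mathcal{D}(\omega)$ repeated integration by parts gives the weighted identity
\begin{equation*}
\int_\omega |\nabla f|^2\,dy=-\int_\omega |y|^2\,\det(\partial_{\alpha\beta}f)\,dy,
\end{equation*}
which extends to all of $H^2_0(\omega)$ by density (this is where $u_3\in H^2_0(\omega)$, rather than $H^2\cap H^1_0$, is used, since the boundary terms are discarded for test functions). The hypothesis $\det(\partial_{\alpha\beta}u_3)=0$ then gives $\nabla u_3=0$ directly, and $u_3=0$ follows because a locally constant function in $H^2_0(\omega)$ with $\omega\neq\mathbb{R}^2$ vanishes. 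This is essentially the ``suitable test-function manipulation'' you allude to in connection with the divergence-form identity for $\det D^2v$, but carried out concretely with the weight $|y|^2$; if you want to salvage your outline, that computation is the missing content, and the extension to $\mathbb{R}^2$ and the developability machinery can be dropped entirely.
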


	\begin{proof}
		Given any function $f\in {\italic{D}}(\omega)$, a repeated use of the integration by parts formula
		shows that 
		\begin{equation*}\aligned{}
			\int_\omega |\nabla f|^2dy
			&=\frac12\int_\omega \Big[\partial_{22}((y_2)^2)(\partial_1f)^2+\partial_{11}((y_1)^2)(\partial_2f)^2\Big]dy\\
			&=\frac12\int_\omega \Big[(y_2)^2\partial_{22}((\partial_1f)^2)+(y_1)^2\partial_{11}((\partial_2f)^2)\Big]dy\\
			&=\int_\omega (y_2)^2\Big((\partial_{12}f)^2+\partial_1f\partial_{122}f\Big)dy\\
			& +\int_\omega (y_1)^2\Big((\partial_{12}f)^2+\partial_2f\partial_{112}f)\Big)dy\\
			&=\int_\omega (y_2)^2\Big(\partial_1(\partial_1f\partial_{22}f)-\det(\partial_{\alpha\beta}f)\Big)dy\\
			& +\int_\omega (y_1)^2\Big(\partial_2(\partial_2f\partial_{11}f)-\det(\partial_{\alpha\beta}f)\Big)dy.
			\endaligned\end{equation*}
		
		Since  
		\begin{equation*}
			(y_1)^2\partial_2(\partial_2f\partial_{11}f)=\partial_2((y_1)^2\partial_2f\partial_{11}f)
		\end{equation*}
		and  
		\begin{equation*}
			(y_2)^2\partial_1(\partial_2f\partial_{22}f)=\partial_1((y_2)^2\partial_1f\partial_{22}f),
		\end{equation*}
		we deduce that 
		\begin{equation*}\aligned{}
			\int_\omega |\nabla f|^2dy=-\int_\omega |y|^2 \det(\partial_{\alpha\beta}f) dy \text{ for all } f\in {\italic{D}}(\omega).
			\endaligned\end{equation*}
		
		Since $H^2_0(\omega)$ is the closure in $H^2(\omega)$ of ${\italic{D}}(\omega)$, given any function $u_3\in H^2_0(\omega)$, there exists a sequence $(f_n)_{n\in{\capital{N}}}$ such that $\|f_n-u_3\|_{H^2(\omega)}\to 0$ when $n\to \infty$. Then passing to the limit when $n\to\infty$ in the above equality with $f$ replaced by $f_n$ yields the equality
		\begin{equation*}\aligned{}
			\int_\omega |\nabla u_3|^2dy=-\int_\omega |y|^2 \det(\partial_{\alpha\beta}u_3) dy \text{ for all } u_3\in H^2_0(\omega).
			\endaligned\end{equation*}
		
		Therefore, if a function $u_3\in H^2_0(\omega)$ satisfies $\det (\partial_{\alpha\beta}u_3)=0$ in $\omega$, then $\nabla u_3=0$ in $\omega$, so $u_3=0$ in $\omega$ thanks to the assumption that $\omega\neq {\capital{R}}^2$. 
		
	\end{proof}

	Proposition \ref{p2} shows that if a function $u_3\in H^2(\omega)$ satisfies the boundary conditions  
	\begin{equation*}
		u_3=\partial_\nu u_3=0 \textup{ \ on } \partial\omega,
	\end{equation*}
	in addition to the partial differential equation  
	\begin{equation*}
		\det (\partial_{\alpha\beta}u_3)=0 \textup{ \ in } \omega,
	\end{equation*}
	where $\omega\subset{\capital{R}}^2$ is a Lipschitz domain, then necessarily $u_3=0$ in $\omega$. We will prove in Proposition \ref{p3} below that if $\omega$ is convex, then the boundary condition
	\begin{equation*}
		u_3=0 \textup{ \ on } \partial\omega,
	\end{equation*}
	instead of the above one,  suffices to conclude that $u_3=0$ in $\omega$. But first we need to prove the following three lemmas.

	\begin{lem}
		\label{l1}
		Let $\omega\subset{\capital{R}}^2$ be a connected open set whose boundary is Lipschitz-continuous. Then 
		\begin{equation}\label{e36}\aligned{} 
			\int_\omega [u,v]w dy=(u,v,w)_{\partial\omega}-(u,v,w)_{\omega}
			\endaligned\end{equation}
		for all functions $u,v,w\in {\italic{C}}^2({\overline\omega})$, where 
		\begin{equation*}\aligned{}
			[u,v]& :=\partial_{11}u\partial_{22}v+\partial_{22}u\partial_{11}v-2\partial_{12}u\partial_{12}v 
			\endaligned\end{equation*}
		and
		\begin{equation*}\aligned{}
			(u,v,w)_{\omega}&:=\int_\omega\Big[\partial_{11}u\partial_2v\partial_2w+\partial_{22}u\partial_1v\partial_1w-\partial_{12}u(\partial_1v\partial_2w+\partial_2v\partial_1w)\Big]dy,\\
			(u,v,w)_{\partial\omega}&:=\int_{\partial\omega}\Big[\big(\partial_{11}u\partial_2v-\partial_{12}u\partial_1v\big)w\nu_2+\big(\partial_{22}u\partial_1v-\partial_{12}u\partial_2v\big)w\nu_1\Big]dy.
			\endaligned\end{equation*}
	\end{lem}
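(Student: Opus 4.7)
The plan is to recognize the identity as an application of the divergence theorem to a specific vector field. Define
\begin{equation*}
F_1 := \partial_{22}u\,\partial_1 v - \partial_{12}u\,\partial_2 v, \qquad F_2 := \partial_{11}u\,\partial_2 v - \partial_{12}u\,\partial_1 v.
\end{equation*}
The algebraic observation driving the proof is that, for sufficiently smooth $u$, one has $\partial_1 F_1 + \partial_2 F_2 = [u,v]$. Indeed, expanding each $\partial_j F_j$ by the product rule produces, alongside the three second-order products making up $[u,v]$, the third-order contributions $\pm\partial_{122}u\,\partial_1 v$ and $\pm\partial_{112}u\,\partial_2 v$, which cancel pairwise thanks to the commutativity of mixed partials.

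I would first prove the identity assuming $u, v, w \in C^\infty(\overline{\omega})$. Using the preceding observation, write $[u,v]\,w = \partial_1(F_1 w) + \partial_2(F_2 w) - F_1\,\partial_1 w - F_2\,\partial_2 w$. Integrating over $\omega$ and applying the divergence theorem (valid on Lipschitz $\omega$) gives
\begin{equation*}
\int_\omega [u,v]\,w\,dy = \int_{\partial\omega}(F_1\nu_1 + F_2\nu_2)\,w\,d\sigma - \int_\omega (F_1\,\partial_1 w + F_2\,\partial_2 w)\,dy.
\end{equation*}
A term-by-term expansion shows that $F_1\nu_1 + F_2\nu_2$ equals exactly the integrand defining $(u,v,w)_{\partial\omega}$, while $F_1\,\partial_1 w + F_2\,\partial_2 w$ equals the integrand defining $(u,v,w)_\omega$. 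This establishes the identity in the smooth case.

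To reach $u, v, w \in C^2(\overline{\omega})$, I would invoke the density of $C^\infty(\overline{\omega})$ in $C^2(\overline{\omega})$ for Lipschitz $\omega$ (via a Lipschitz-graph partition of unity, transverse translation into a slightly larger set, and mollification) and pass to the limit in the smooth identity. Both sides are continuous in the $C^2(\overline{\omega})$ topology of $(u,v,w)$: the left-hand side and $(u,v,w)_\omega$ involve at most second derivatives of $u$ and first derivatives of $v, w$, while $(u,v,w)_{\partial\omega}$ uses only traces of these same derivatives on the Lipschitz boundary. The main obstacle is the temporary appearance of third derivatives of $u$ in the intermediate identity $\partial_1 F_1 + \partial_2 F_2 = [u,v]$; this is precisely what the density argument bypasses, since the final identity itself requires no more than $C^2$ regularity of the three functions.
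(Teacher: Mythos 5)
Your proposal is correct and follows essentially the same route as the paper: the quantities $F_1w$ and $F_2w$ you introduce are exactly the total-derivative terms $\partial_1\big[w(\partial_{22}u\,\partial_1v-\partial_{12}u\,\partial_2v)\big]$ and $\partial_2\big[w(\partial_{11}u\,\partial_2v-\partial_{12}u\,\partial_1v)\big]$ appearing in the paper's integration by parts, with the same cancellation of third-order derivatives of $u$ and the same density reduction (the paper uses $\mathcal{C}^3(\overline\omega)$ dense in $\mathcal{C}^2(\overline\omega)$ where you use $\mathcal{C}^\infty(\overline\omega)$). Your explicit identification of the divergence structure and your remark on how density is justified on a Lipschitz domain are, if anything, slightly more transparent than the paper's presentation.
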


	\begin{proof}
		Since the space ${\italic{C}}^3({\overline\omega})$ is dense in ${\italic{C}}^2({\overline\omega})$, it suffices to prove formula \eqref{e36} for functions $u$ and $v$ in ${\italic{C}}^3({\overline\omega})$. In this case, this formula is a consequence of the integration by parts formula, applied several times to prove the following equalities:
		\begin{equation*}\aligned{}
			\int_\omega [u,v]w dy
			&=\int_\omega\Big[\big(\partial_{11}u\partial_{22}v -\partial_{12}u\partial_{12}v\big)w+\big(\partial_{22}u\partial_{11}v-\partial_{12}u\partial_{12}v\big)w\Big]dy\\
			& =\int_\omega \Big\{\partial_2\Big[w\big(\partial_{11}u\partial_{2}v -\partial_{12}u\partial_{1}v\big)\Big]
			-\partial_2\big(w\partial_{11}u\big)\partial_2v 
			+\partial_2\big(w\partial_{12}u\big)\partial_1v\Big\}dy\\
			&+\int_\omega \Big\{
			\partial_1\Big[w\big(\partial_{22}u\partial_{1}v-\partial_{12}u\partial_{2}v\big)\Big]
			-\partial_1\big(w\partial_{22}u\big)\partial_{1}v+\partial_1\big(w\partial_{12}u)\partial_{2}v
			\Big\}dy\\
			&=(u,v,w)_{\partial\omega}-(u,v,w)_{\omega}.
			\endaligned\end{equation*}
	\end{proof}

	\begin{lem}
		\label{l2}
		Let $\omega\subset{\capital{R}}^2$ be a bounded and connected open set whose boundary is Lipschitz-continuous. For any functions $u,v,w\in H^2(\omega)$, define 
		\begin{equation*}\aligned{}
			[u,v]& :=\partial_{11}u\partial_{22}v+\partial_{22}u\partial_{11}v-2\partial_{12}u\partial_{12}v ,\\
			(u,v,w)_{\omega}&:=\int_\omega\Big[\partial_{11}u\partial_2v\partial_2w+\partial_{22}u\partial_1v\partial_1w-\partial_{12}u(\partial_1v\partial_2w+\partial_2v\partial_1w)\Big]dy.
			\endaligned\end{equation*}
		
		Then  
		\begin{equation*}
			\int_\omega [f,f]g \, dy=-(g,f,f)_{\omega}
		\end{equation*}
		for all $f,g\in H^2(\omega)\cap H^1_0(\omega)$.
	\end{lem}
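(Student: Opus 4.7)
The plan is to extract the identity from Lemma~\ref{l1} by choosing three different triples of arguments, exploiting the symmetry $[u,v]=[v,u]$ of the Hessian bracket and a direct symmetry of the form $(u,v,w)_\omega$ in its last two slots, and then extending from smooth functions to $H^2\cap H^1_0$ by density. By direct inspection of the definition of $(u,v,w)_\omega$, the quantities $\partial_1 v\,\partial_1 w$, $\partial_2 v\,\partial_2 w$ and $\partial_1 v\,\partial_2 w+\partial_2 v\,\partial_1 w$ are all symmetric in $v$ and $w$, so $(u,v,w)_\omega=(u,w,v)_\omega$; and the bracket $[u,v]$ is manifestly symmetric in $u$ and $v$.

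For smooth $\tilde f,\tilde g\in C^2(\overline\omega)$ whose traces on $\partial\omega$ vanish, I would apply Lemma~\ref{l1} to $(u,v,w)$ taken successively as $(\tilde f,\tilde f,\tilde g)$, $(\tilde f,\tilde g,\tilde f)$, and $(\tilde g,\tilde f,\tilde f)$. The explicit boundary integrand $(u,v,w)_{\partial\omega}$ contains $w$ as a factor in every term, and in each of the three substitutions $w$ is one of $\tilde f$ or $\tilde g$; hence all three boundary terms vanish. Combining the last two substitutions through the identity $[\tilde f,\tilde g]=[\tilde g,\tilde f]$ gives $(\tilde f,\tilde g,\tilde f)_\omega=(\tilde g,\tilde f,\tilde f)_\omega$. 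The first substitution gives $\int_\omega[\tilde f,\tilde f]\tilde g\,dy=-(\tilde f,\tilde f,\tilde g)_\omega$, and the symmetry in the last two slots identifies $(\tilde f,\tilde f,\tilde g)_\omega=(\tilde f,\tilde g,\tilde f)_\omega$. Chaining these equalities produces the desired identity $\int_\omega[\tilde f,\tilde f]\tilde g\,dy=-(\tilde g,\tilde f,\tilde f)_\omega$ for smooth functions vanishing on $\partial\omega$.

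To reach $f,g\in H^2(\omega)\cap H^1_0(\omega)$, I would approximate by a pair of sequences $\tilde f_n,\tilde g_n\in C^\infty(\overline\omega)\cap H^1_0(\omega)$ converging to $f$ and $g$ in $H^2(\omega)$. Both sides of the identity are continuous with respect to $H^2$ convergence: in two dimensions $H^2(\omega)\hookrightarrow L^\infty(\omega)$ and $H^1(\omega)\hookrightarrow L^p(\omega)$ for every finite $p$, so the integrand $[f,f]\,g$ lies in $L^1(\omega)$, the trilinear expression $(g,f,f)_\omega$ is controlled by products of $L^2$, $L^p$, $L^p$ norms of the relevant derivatives, and standard product estimates give continuity in $f$ and $g$. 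Passing to the limit in the smooth identity then yields the identity in full generality.

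The main obstacle is precisely this density claim: that $C^\infty(\overline\omega)\cap H^1_0(\omega)$ is dense in $H^2(\omega)\cap H^1_0(\omega)$ in the $H^2$ norm. This is strictly stronger than the more familiar density of $\mathcal D(\omega)$ in $H^2_0(\omega)$, since an element of $H^2\cap H^1_0$ may have nonzero normal derivative on $\partial\omega$. For a bounded Lipschitz domain the density does hold, via a classical but technical construction combining local flattening of $\partial\omega$, an inward translation (which preserves the vanishing trace while moving the graph of each local piece into the interior), and convolution with a smooth kernel; careful verification of this construction is the only genuinely delicate part of the proof.
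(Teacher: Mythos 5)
Your proof is correct and follows essentially the same route as the paper: three applications of Lemma~\ref{l1} with the triples $(f,f,g)$, $(f,g,f)$, $(g,f,f)$, the symmetries $[u,v]=[v,u]$ and $(u,v,w)_\omega=(u,w,v)_\omega$, the vanishing of all boundary terms because $w$ factors every term of $(u,v,w)_{\partial\omega}$, and a density-plus-continuity passage to $H^2(\omega)\cap H^1_0(\omega)$. The only difference is cosmetic: you single out the density of smooth functions vanishing on $\partial\omega$ in $H^2(\omega)\cap H^1_0(\omega)$ as the delicate point and sketch its proof, whereas the paper simply asserts that $\mathcal{C}^2(\overline\omega)\cap H^1_0(\omega)$ is dense there.
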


	\begin{proof}
		For any functions $u,v,w\in{\italic{C}}^2({\overline\omega})$, define
		\begin{equation*}\aligned{}
			(u,v,w)_{\partial\omega}&:=\int_{\partial\omega}\Big[\big(\partial_{11}u\partial_2v-\partial_{12}u\partial_1v\big)w\nu_2+\big(\partial_{22}u\partial_1v-\partial_{12}u\partial_2v\big)w\nu_1\Big]dy.
			\endaligned\end{equation*}
		
		Given any functions $u,w\in{\italic{C}}^2({\overline\omega})$, Lemma \ref{l1} shows that 
		\begin{equation*}\aligned{}
			\int_\omega [u,u]w dy&=(u,u,w)_{\partial\omega}-(u,u,w)_{\omega},\\
			\int_\omega [u,w]u dy&=(u,w,u)_{\partial\omega}-(u,w,u)_{\omega},\\
			\int_\omega [w,u]u dy&=(w,u,u)_{\partial\omega}-(w,u,u)_{\omega}.
			\endaligned\end{equation*}

		Moreover, an inspection of the definition of $[u,v]$ and $(u,v,w)_\omega$ shows that
		\begin{equation*}
			[u,w]=[w,u] \text{ \ and \ } (u,u,w)_{\omega}=(u,w,u)_{\omega}.
		\end{equation*}
		Therefore,
		\begin{equation}\label{e37}\aligned{} 
			\int_\omega [u,u]w dy&=(u,u,w)_{\partial\omega}-(u,w,u)_{\omega}\\
			&=(u,u,w)_{\partial\omega}-(u,w,u)_{\partial\omega}+\int_\omega [w,u]u dy\\
			&=(u,u,w)_{\partial\omega}-(u,w,u)_{\partial\omega}+(w,u,u)_{\partial\omega}-(w,u,u)_{\omega},
			\endaligned\end{equation}
		for all functions $u,w\in{\italic{C}}^2({\overline\omega})$.
		
		Now, let $f,g\in H^2(\omega)\cap H^1_0(\omega)$. Since the space ${\italic{C}}^2({\overline\omega})\cap H^1_0(\omega)$ is dense in $H^2(\omega)\cap H^1_0(\omega)$, there exist sequences $(f_n)_{n\in{\capital{N}}}$ and  $(f_n)_{n\in{\capital{N}}}$ in ${\italic{C}}^2({\overline\omega})\cap H^1_0(\omega)$ such that  
		\begin{equation*}
			\|f_n-f\|_{H^2(\omega)}\to 0 \text{ \ and \ } \|g_n-g\|_{H^2(\omega)}\to 0 \text{ \ when } n\to\infty.
		\end{equation*}
		
		Since $f_n=0$ and $g_n=0$ on $\partial\omega$, the definition of $(\cdot,\cdot,\cdot)_{\omega}$ implies that $(f_n,g_n,f_n)_{\partial\omega}=(g_n,f_n,f_n)_{\partial\omega}=0$ and $(f_n,f_n,g_n)_{\partial\omega}=0$ for all $n\in{\capital{N}}$. Then we infer from identity \eqref{e37} that  
		\begin{equation*}
			\int_\omega [f_n,f_n]g_n dy=(g_n,f_n,f_n)_{\omega} \text{ \ for all } n\in{\capital{N}}.
		\end{equation*}
		Since the embedding $H^2(\omega)\subset L^4(\omega)$ is continuous under the assumptions of the lemma on the domain $\omega$, it is possible to pass to the limit when $n\to\infty$ the above relation to deduce that  
		\begin{equation*}
			\int_\omega [f,f]g \, dy=(g,f,f)_{\omega}.
		\end{equation*}
	\end{proof}

	\begin{lem}
		\label{l3}
		Let $\omega\subset{\capital{R}}^2$ be a bounded and convex set. Then there exists a function $w\in H^2(\omega)\cap H^1_0(\omega)$ such that the matrix field $(\partial_{\alpha\beta}w)\in L^2(\omega)$ be positive-definite at almost all points of $\omega$. 
	\end{lem}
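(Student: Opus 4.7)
The plan is to construct $w$ as the Alexandrov solution of a Monge--Amp\`ere Dirichlet problem on $\omega$, using the convexity of $\omega$ to secure both the positive-definiteness of the Hessian almost everywhere and its $L^2$-integrability up to the boundary. More precisely, I would let $w$ denote the unique convex function, continuous on $\overline\omega$, satisfying $w=0$ on $\partial\omega$ and $\det(\partial_{\alpha\beta}w)=1$ in $\omega$ in the Alexandrov (weak) sense; such a solution exists for every bounded convex open set by classical results of Alexandrov. Being convex, its Hessian---wherever it exists as an $L^2$ matrix field---is positive semi-definite almost everywhere in $\omega$, and the relation $\det(\partial_{\alpha\beta}w)=1>0$ forces both eigenvalues to be strictly positive almost everywhere, so that $(\partial_{\alpha\beta}w)$ is positive-definite at almost every point of $\omega$.

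The remaining task is to verify that $w$ belongs to $H^2(\omega)\cap H^1_0(\omega)$. The boundary condition together with continuity up to $\partial\omega$ gives $w\in H^1_0(\omega)$ directly. For the $H^2$-regularity, interior estimates of Caffarelli yield $w\in W^{2,p}_{\mathrm{loc}}(\omega)$ for every finite $p$; to obtain regularity up to the boundary, I would approximate $\omega$ from inside by a sequence $(\omega_n)$ of smooth uniformly convex domains with $\omega_n\subset\subset\omega$ and $\omega_n\uparrow\omega$, solve the Monge--Amp\`ere Dirichlet problem on each $\omega_n$ (obtaining $w_n\in C^{2,\alpha}(\overline{\omega_n})$ by Caffarelli--Pogorelov regularity), extend each $w_n$ by zero to $\omega$, and pass to the limit using the uniform $L^\infty$-bound on $w_n$ that follows from the Alexandrov maximum principle in terms of $\mathrm{diam}(\omega)$, together with the interior $W^{2,p}$-estimates, so as to extract a subsequence converging weakly in $H^2(\omega)$ to a limit which I would identify as a solution of the original Monge--Amp\`ere problem on $\omega$.

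The main obstacle will be establishing uniform $H^2$ bounds on the approximating sequence $(w_n)$ and verifying that the weak limit still satisfies $w=0$ on $\partial\omega$ in the sense of traces and that the pointwise strict positive-definiteness of the Hessian survives the passage to the limit. Convexity of $\omega$ is essential on three fronts: it permits inside approximation by uniformly convex smooth domains, it underlies the $L^\infty$-estimate on Alexandrov solutions, and it ensures that any weak $H^2$-limit of convex functions remains convex, so that the identity $\det(\partial_{\alpha\beta}w)=1$, inherited in the Alexandrov sense at the limit, transfers the desired positive-definiteness almost everywhere to the limiting $w$.
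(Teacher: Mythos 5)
Your route through the Monge--Amp\`ere Dirichlet problem is genuinely different from the paper's, which avoids the nonlinear equation entirely: the paper solves the \emph{linear} problem $-\Delta f=1$, $f\in H^1_0(\omega)$ (whose solution lies in $H^2(\omega)$ on any bounded convex domain by the classical Kadlec--Grisvard regularity theory), invokes Makar-Limanov's theorem \cite{mk} to get $\det(\partial_{\alpha\beta}f)>0$ a.e., and then observes that $-\partial_{\alpha\beta}f$ has positive trace ($=-\Delta f=1$) and positive determinant, hence is positive definite a.e.; the substitution $w=-\log(1+f)$ only adds a positive semi-definite rank-one correction. All the regularity needed is thus linear elliptic regularity. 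Your proposal, by contrast, hinges on a regularity assertion that you yourself flag as ``the main obstacle'' and do not establish, and this is not a recoverable technicality: it is the step that fails.

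Concretely, the lemma is stated for an arbitrary bounded convex $\omega$, so it must cover polygons, and for a convex domain whose boundary contains a line segment the Alexandrov solution of $\det(\partial_{\alpha\beta}w)=1$, $w=0$ on $\partial\omega$, should not be expected to belong to $H^2(\omega)$ up to that segment. Indeed, on the segment the boundary datum is affine, and convex functions converging uniformly to an affine function have gradients converging locally uniformly, so the tangential second derivative $\partial_{\tau\tau}w$ is forced to degenerate to zero as one approaches the interior of the segment; the constraint $\partial_{\tau\tau}w\,\partial_{nn}w-(\partial_{\tau n}w)^2=1$ then forces $\partial_{nn}w$ to blow up, and the known global $W^{2,p}$ theory for Monge--Amp\`ere (Savin; Trudinger--Wang) requires uniform convexity and boundary smoothness precisely to rule this out. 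So the uniform $H^2$ bound on your approximating sequence $(w_n)$ is not merely unproved but unavailable in the generality required. Two further gaps: (i) your derivation of $w\in H^1_0(\omega)$ from ``continuity up to $\overline\omega$ plus $w=0$ on $\partial\omega$'' is invalid without first showing $\nabla w\in L^2(\omega)$ --- a bounded convex function vanishing on the boundary of a convex domain need not have a square-integrable gradient, as $-\sqrt{1-|y|^2}$ on the unit disc shows; (ii) passing from the Alexandrov Monge--Amp\`ere measure to the a.e.\ pointwise Hessian determinant requires the decomposition of the Monge--Amp\`ere measure into absolutely continuous and singular parts, which you use implicitly. I would recommend abandoning the nonlinear equation and following the linear route.
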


	\begin{proof}
		Let $f\in H^1_0(\omega)$ be the unique solution to the equation  
		\begin{equation*}
			-\Delta f=1 \text{ \ in } {\italic{D}}'(\omega).  
		\end{equation*}
		Under the assumptions of the lemma on the domain $\omega$, $f\in H^2(\omega)\subset {\italic{C}}^0({\overline\omega})$ and $f(x)>0$ for all $x\in \omega$. 
		
		Define the function $w:\omega\to{\capital{R}}$ by  
		\begin{equation*}
			w(y)=-\log(1+ f(y)) \text{ \ for all } y\in \omega.
		\end{equation*}
		Then $w\in{\italic{C}}^0(\omega)$ and $w(y)=0$ for all $y\in \partial\omega$. Besides, $w$ has weak partial derivatives up to order two that satisfy  
		\begin{equation*}\aligned{}
			\partial_\alpha w&=-\frac{\partial_\alpha f}{1+ f} \in L^2(\omega),\\
			\partial_{\alpha\beta} w&= -\frac{\partial_{\alpha\beta} f}{1+f}+\frac{\partial_\alpha f\partial_\beta f}{(1+f)^2}\in L^2(\omega).
			\endaligned\end{equation*}
		Hence $u\in H^2(\omega)\cap H^1_0(\omega)$.
		
		It remains to prove that the matrix field $(\partial_{\alpha\beta}w)$ is positive-definite at almost all points of $\omega$. Since
		\begin{equation*}
			(\partial_{\alpha\beta}w)=\frac1{1+f}A+\frac1{(1+f)^2}B \text{ \ a.e. in } \omega,
		\end{equation*}
		where $A=(A_{\alpha\beta})$ and  $B=(B_{\alpha\beta})$ are symmetric matrix fields with entries defined by
		\begin{equation*}\aligned{}
			A_{\alpha\beta}& :=-\partial_{\alpha\beta} f&& \text{ a.e. in } \omega,\\
			B_{\alpha\beta}&:=\partial_\alpha f\partial_\beta f &&\text{ a.e. in } \omega,
			\endaligned\end{equation*}
		it suffices to prove that $A$ is positive-definite at almost all points of $\omega$ and $B$ is semi-positive definite at almost all points of $\omega$. 
		
		The symmetric matrix field $B$ is semi-positive definite at almost all points of $\omega$ since  
		\begin{equation*}
			B_{\alpha\beta}v_\alpha v_\beta=(v_\alpha\partial_\alpha f)^2\geq 0 \text{ \ for all vector } (v_\alpha)\in{\capital{R}}^2.
		\end{equation*}
		
		The symmetric matrix field $B$ is positive-definite at almost all points of $\omega$ since both its trace and determinant are positive at almost all points of $\omega$. For, 
		\begin{equation*}\aligned{}
			\operatorname{Tr}(A)&=-\Delta f=1>0 &&  \text{ a.e. in } \omega,\\
			\det(A)&=\det(\partial_{\alpha\beta} f)>0 &&  \text{ a.e. in } \omega,
			\endaligned\end{equation*}
		thanks to the definition of $f$ as the unique solution in $H^1_0(\omega)$ of the equation $-\Delta f=1$ in ${\italic{D}}'(\omega)$. That the convexity of the domain $\omega$ implies that $\det(\partial_{\alpha\beta} f)>0$ a.e. in $\omega$ has been proved by Makar-Limanov \cite{mk}.
	\end{proof}

	We are now in a position to prove the following proposition, which will be needed in the proof of  Theorem \ref{t3} in  addition to Proposition \ref{p1}.

	\begin{proposition}
		\label{p3}
		Let $\omega\subset {\capital{R}}^2$ be a convex open set such that $\omega\neq {\capital{R}}^2$. Then a function $u_3\in H^1_0(\omega)\cap H^2(\omega)$ satisfies  
		\begin{equation*}
			\det (\partial_{\alpha\beta}u_3)=0 \textup{ \ in } \omega
		\end{equation*}
		only if  
		\begin{equation*}
			u_3=0 \textup{ \ in } \omega.
		\end{equation*}
	\end{proposition}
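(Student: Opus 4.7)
The plan is to adapt the strategy of Proposition \ref{p2} by replacing the test function $|y|^2$ (whose Hessian is the constant positive-definite matrix $2I$ but which does \emph{not} vanish on $\partial\omega$) with the function $w\in H^2(\omega)\cap H^1_0(\omega)$ supplied by Lemma \ref{l3}. This function has two features that are precisely what is needed: its Hessian is positive definite almost everywhere in $\omega$, and it vanishes on $\partial\omega$. The vanishing of $w$ on $\partial\omega$ is what allows the boundary terms to disappear in the weak integration-by-parts identity of Lemma \ref{l2}, so that the argument applies under the sole boundary hypothesis $u_3\in H^1_0(\omega)$, rather than the stronger $u_3\in H^2_0(\omega)$ used in Proposition \ref{p2}.

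Concretely, I would first observe that the bracket in Lemma \ref{l2} satisfies $[u_3,u_3]=2\det(\partial_{\alpha\beta}u_3)$, so that the hypothesis $\det(\partial_{\alpha\beta}u_3)=0$ in $\omega$ is equivalent to $[u_3,u_3]=0$ in $\omega$. Applying Lemma \ref{l2} with $f=u_3$ and $g=w$ then yields
\[
0=\int_\omega [u_3,u_3]\,w\,dy=-(w,u_3,u_3)_\omega,
\]
so, unfolding the definition of $(\cdot,\cdot,\cdot)_\omega$,
\[
\int_\omega \Big[\partial_{11}w\,(\partial_2 u_3)^2+\partial_{22}w\,(\partial_1 u_3)^2-2\partial_{12}w\,\partial_1 u_3\,\partial_2 u_3\Big]\,dy=0.
\]

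The key observation is that the integrand above is exactly the quadratic form associated with the Hessian $(\partial_{\alpha\beta}w)$ evaluated on the rotated gradient $\eta:=(-\partial_2 u_3,\partial_1 u_3)$; namely, it equals $(\partial_{\alpha\beta}w)\eta_\alpha\eta_\beta$. Since Lemma \ref{l3} guarantees that $(\partial_{\alpha\beta}w)$ is positive definite almost everywhere in $\omega$, this integrand is pointwise nonnegative and strictly positive wherever $\nabla u_3\neq 0$. The vanishing of its integral therefore forces $\nabla u_3=0$ almost everywhere in $\omega$, so $u_3$ is constant on the convex (hence connected) domain $\omega$. The boundary condition $u_3=0$ on $\partial\omega$ encoded in $u_3\in H^1_0(\omega)$ then forces this constant to be zero, and the assumption $\omega\neq\mathbb{R}^2$ rules out the degenerate case in which $H^1_0(\omega)$ contains non-zero constants.

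The genuine difficulty of the statement --- namely, the production of a function in $H^2(\omega)\cap H^1_0(\omega)$ with pointwise positive-definite Hessian, which is where the convexity of $\omega$ enters via Makar-Limanov's inequality --- has already been isolated and dispatched in Lemma \ref{l3}. Consequently, the only step in the plan that requires any care is the algebraic recognition of the integrand of $(w,u_3,u_3)_\omega$ as a quadratic form in the rotated gradient of $u_3$; the remainder is a direct assembly of Lemmas \ref{l2} and \ref{l3}.
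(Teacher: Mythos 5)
Your proposal is correct and follows essentially the same route as the paper's own proof: apply Lemma \ref{l2} with $f=u_3$ and the auxiliary function $w$ from Lemma \ref{l3}, recognize the resulting integrand as the Hessian quadratic form of $w$ evaluated on the rotated gradient of $u_3$, and conclude $\nabla u_3=0$ from the almost-everywhere positive definiteness, whence $u_3=0$ by the $H^1_0$ condition. No substantive differences to report.
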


	\begin{proof}
		Let $u\in H^1_0(\omega)\cap H^2(\omega)$ satisfy  
		\begin{equation*}
			\det (\partial_{\alpha\beta}u)=0 \textup{ \ in } \omega.  
		\end{equation*}
		Let $w\in H^1_0(\omega)\cap H^2(\omega)$ be any function whose Hessian matrix field $(\partial_{\alpha\beta}w)\in L^2(\omega;{\capital{R}}^{2\times 2})$ is positive-definite almost everywhere in $\omega$ (the existence of such a function is guaranteed by Lemma \ref{l3}). 
		
		Then Lemma \ref{l2} shows that  
		\begin{equation*}
			\int_\omega [u,u]w \, dy=-(w,u,u)_{\omega},  
		\end{equation*}
		or equivalently, in view of the definition of $[\cdot,\cdot]$ and $(\cdot,\cdot,\cdot)_\omega$ in the statement of Lemma \ref{l2},  
		\begin{equation*}
			2\int_\omega \det(\partial_{\alpha\beta}u) w\, dy=-
			\int_\omega\Big[\partial_{11}w(\partial_2u)^2+\partial_{22}w(\partial_1u)^2-2\partial_{12}w\partial_1u\partial_2u\Big]dy.
		\end{equation*}
		
		Since $\det (\partial_{\alpha\beta}u_3)=0$ in $\omega$, we get   
		\begin{equation*}
			\int_\omega\Big[\partial_{11}w(\partial_2u)^2+\partial_{22}w(\partial_1u)^2-2\partial_{12}w\partial_1u\partial_2u\Big]dy=0.
		\end{equation*}
		But the matrix field $(\partial_{\alpha\beta}w)\in L^2(\omega;{\capital{R}}^{2\times 2})$ is positive-definite almost everywhere in $\omega$. Therefore, the vector field $(\partial_2u, -\partial_1u)$ must be equal to the zero vector in ${\capital{R}}^2$ almost everywhere in $\omega$. Hence the function $u\in H^1_0(\omega)$ is constant, so $u=0$ in $\omega$. 
	\end{proof}
	
	A consequence of Proposition \ref{p3}, combined with Proposition \ref{p1}, is the following rigidity theorem for plates. Note that other different rigidity theorems will be proved in the next section. Remember that the tensor field ${\boldletter{E}}({\boldletter{u}})=(E_{\alpha\beta}({\boldletter{u}})):\omega\to{\capital{R}}^{2\times 2}$ associated with a vector field ${\boldletter{u}}:=(u_i):\omega\to{\capital{R}}^3$ is defined by  
	\begin{equation*}
		E_{\alpha\beta}({\boldletter{u}}):=\frac12(\partial_\alpha u_\beta+\partial_\beta u_\alpha+\partial_\alpha u_3\partial_\beta u_3).
	\end{equation*}
	Note that the conclusion of the next theorem means that the set $V(\omega)$ satisfies all the assumptions of Theorem \ref{t2}, thus ensuring the existence of minimizers for the two-dimensional model of nonlinearly elastic plates.

	\begin{thm}
		\label{t3}
		Let $\omega\subset {\capital{R}}^2$ be a convex open set such that $\omega\neq {\capital{R}}^2$. Assume that the set $V(\omega)=V_H(\omega)\times V_3(\omega)$, where $V_H(\omega)\subset H^1(\omega)\times H^1(\omega)$ and $V_3(\omega)\subset H^2(\omega)$, satisfy the following two conditions:  
		\begin{equation*}
			(u_\alpha)\in V_H(\omega) \textup{ and } \partial_\alpha u_\beta+\partial_\beta u_\alpha =0 \textup{ in } \omega \textup{ imply } u_\alpha=0 \textup{ in } \omega,
		\end{equation*}
		and  
		\begin{equation*}
			u_3\in V_3(\omega) \textup{ implies } u_3=0 \textup{ on } \partial\omega.\end{equation*}
		Then the set $V(\omega)$ satisfies the following rigidity property: 
		\begin{equation}\label{e38}\aligned{} 
			u_3 \in V_3(\omega) \textup{ and } \partial_{\alpha\beta}u_3=0 \textup{ in } \omega \textup{ imply } u_3=0 \textup{ in } \omega,
			\endaligned\end{equation}
		and 
		\begin{equation}\label{e39}\aligned{} 
			{\boldletter{u}} \in V(\omega) \textup{ and } {\boldletter{E}}({\boldletter{u}})={\boldletter{0}} \textup{ in } \omega \textup{ imply } {\boldletter{u}}={\boldletter{0}} \textup{ in } \omega.
			\endaligned\end{equation}
	\end{thm}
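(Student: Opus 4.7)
The plan is to derive both rigidity properties as essentially direct consequences of Propositions \ref{p1} and \ref{p3}, combined with the two assumptions on $V_H(\omega)$ and $V_3(\omega)$.

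I would establish the rigidity property \eqref{e38} first. Let $u_3 \in V_3(\omega)$ satisfy $\partial_{\alpha\beta} u_3 = 0$ in $\omega$. This trivially implies $\det(\partial_{\alpha\beta} u_3) = 0$ in $\omega$, and the second assumption of the theorem guarantees that $u_3 = 0$ on $\partial\omega$, hence $u_3 \in H^1_0(\omega) \cap H^2(\omega)$. Since $\omega$ is convex and $\omega \neq \mathbb{R}^2$, Proposition \ref{p3} yields $u_3 = 0$ in $\omega$. (Alternatively one could note that the vanishing Hessian makes $u_3$ affine and then use the boundary condition directly, but invoking Proposition \ref{p3} keeps the argument uniform with the next step.)

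I would then establish \eqref{e39} by the same mechanism applied to the full strain tensor. Let $\mathbf{u} = (u_i) \in V(\omega)$ satisfy $\mathbf{E}(\mathbf{u}) = \mathbf{0}$ in $\omega$, that is,
\begin{equation*}
\partial_\alpha u_\beta + \partial_\beta u_\alpha + \partial_\alpha u_3 \, \partial_\beta u_3 = 0 \quad \text{in } \omega.
\end{equation*}
By Proposition \ref{p1}, this forces $\det(\partial_{\alpha\beta} u_3) = 0$ in $\omega$. Because $u_3 \in V_3(\omega)$, the second assumption of the theorem gives $u_3 = 0$ on $\partial\omega$, so $u_3 \in H^1_0(\omega) \cap H^2(\omega)$, and Proposition \ref{p3} applies to yield $u_3 = 0$ in $\omega$. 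Substituting this back into the strain equation reduces it to the linearized condition $\partial_\alpha u_\beta + \partial_\beta u_\alpha = 0$ in $\omega$, and the first assumption on $V_H(\omega)$ then gives $u_\alpha = 0$ in $\omega$. Hence $\mathbf{u} = \mathbf{0}$, which is \eqref{e39}.

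There is no real obstacle at this stage: the conceptual work has been absorbed into Propositions \ref{p1} and \ref{p3}, and the theorem just assembles them. The only point requiring a moment of care is verifying the membership $u_3 \in H^1_0(\omega) \cap H^2(\omega)$ needed to apply Proposition \ref{p3}; this is ensured precisely by the second assumption of the theorem, which replaces the stronger clamped boundary condition $u_3 = \partial_\nu u_3 = 0$ used by Rabier with the weaker condition $u_3 = 0$ on $\partial\omega$ and leverages the convexity of $\omega$ through Lemma \ref{l3}.
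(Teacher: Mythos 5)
Your proposal is correct and follows essentially the same route as the paper: Proposition \ref{p1} to get $\det(\partial_{\alpha\beta}u_3)=0$, Proposition \ref{p3} (via the membership $u_3\in H^1_0(\omega)\cap H^2(\omega)$ supplied by the second assumption) to conclude $u_3=0$, and then the first assumption on $V_H(\omega)$ to kill $u_\alpha$. The only cosmetic difference is that for \eqref{e38} the paper argues directly that a vanishing Hessian forces $u_3$ to be affine and hence zero by the boundary condition, whereas you invoke Proposition \ref{p3}; both are valid.
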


	\begin{proof}
		First, let $u_3 \in V_3(\omega)$ satisfy $\partial_{\alpha\beta}u_3=0$ in $\omega$. Since $\omega$ is connected by assumption, this implies that $u_3$ is an affine function that vanishes on the boundary of $\omega$. Since $\omega$ is also bounded by assumption, this implies that $u_3=0$ in $\omega$.
		
		Second, let ${\boldletter{u}}=(u_i) \in V(\omega)$ satisfy ${\boldletter{E}}({\boldletter{u}})={\boldletter{0}}$ in $\omega$. Then Proposition \ref{p1} implies that  
		\begin{equation*}
			\det (\partial_{\alpha\beta}u_3)=0 \textup{ \ in } \omega.
		\end{equation*}
		Besides, ${\boldletter{u}}=(u_i) \in V(\omega)$ implies that $u_3\in V_3(\omega)$, which in turn implies that $u_3\in H^2(\omega)\cap H^1_0(\omega)$ by the assumptions of the theorem. This means that $u_3$ satisfies all the assumptions of Proposition \ref{p3}, so  
		\begin{equation*}
			u_3=0 \text{ in } \omega.
		\end{equation*}
		
		Then we infer from the relations ${\boldletter{u}}\in V(\omega)$ and ${\boldletter{E}}({\boldletter{u}})={\boldletter{0}}$ in $\omega$, and from  the definition of the components $E_{\alpha\beta}({\boldletter{u}})$ of  ${\boldletter{E}}({\boldletter{u}})$, that  
		\begin{equation*}
			(u_\alpha)\in V_H(\omega) \text{ and } \partial_\alpha u_\beta+\partial_\beta u_\alpha =0 \textup{ in } \omega.  
		\end{equation*}
		Then the assumptions of the theorem imply that  
		\begin{equation*}
			u_\alpha=0 \textup{ in } \omega.
		\end{equation*}
		Thus ${\boldletter{u}}={\boldletter{0}}$ in $\omega$, which completes the proof of the theorem.
	\end{proof}

	An immediate consequence of Theorems \ref{t2} and \ref{t3} is the following existence result for the Kirchhoff-Love nonlinear model of plates:
	
	\begin{cor}
		\label{c1}
		Let $\omega\subset {\capital{R}}^2$ be a convex open set such that $\omega\neq {\capital{R}}^2$. Let $V_3(\omega)$ be any closed subspace of $H^2(\omega)\cap H^1_0(\omega)$ and let $V_H(\omega)$ be any closed subspace of $H^1(\omega)\times H^1(\omega)$ such that  
		\begin{equation*}
			V_H(\omega)\cap \operatorname{Rig}(\omega)=\{{\boldletter{0}}\},
		\end{equation*}
		where $\operatorname{Rig}(\omega)$ is the three-dimensional space formed by the functions  
		\begin{equation*}
			(y_1,y_2)\in\omega \mapsto (a,b)+c(y_2, -y_1)\in{\capital{R}}^2,
		\end{equation*}
		where $a,b,c\in{\capital{R}}$.
		
		Then the functional $J$ defined by \eqref{e1}-\eqref{e6} has a minimizer over the space $V(\omega)=V_H(\omega)\times V_3(\omega)$. 
	\end{cor}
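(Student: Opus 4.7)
The plan is to deduce the corollary directly from Theorems \ref{t2} and \ref{t3}; the essential work is to verify that under the hypothesis $V_H(\omega)\cap \operatorname{Rig}(\omega)=\{{\boldletter{0}}\}$, the subspace $V_H(\omega)$ satisfies the first hypothesis of Theorem \ref{t3}, namely that $(u_\alpha)\in V_H(\omega)$ and $\partial_\alpha u_\beta+\partial_\beta u_\alpha=0$ in $\omega$ imply $u_\alpha=0$ in $\omega$.

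First I would establish the kernel identity for the symmetric gradient in two dimensions: if $(u_\alpha)\in H^1(\omega)\times H^1(\omega)$ satisfies $\partial_\alpha u_\beta+\partial_\beta u_\alpha=0$ in $\omega$, then $(u_\alpha)\in \operatorname{Rig}(\omega)$. This is the classical infinitesimal rigidity lemma: differentiating the three relations $\partial_1 u_1=0$, $\partial_2 u_2=0$, $\partial_1 u_2+\partial_2 u_1=0$ in the sense of distributions shows that $\partial_{\alpha\beta} u_\gamma=0$ for all $\alpha,\beta,\gamma$, so each component $u_\gamma$ is affine on the connected open set $\omega$, and the system $\partial_\alpha u_\beta+\partial_\beta u_\alpha=0$ then forces the affine coefficients to have precisely the form $(a,b)+c(y_2,-y_1)$. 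Combining this with the assumption $V_H(\omega)\cap\operatorname{Rig}(\omega)=\{{\boldletter{0}}\}$ yields the desired implication.

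Next, since $V_3(\omega)\subset H^1_0(\omega)$ by assumption, every $u_3\in V_3(\omega)$ satisfies $u_3=0$ on $\partial\omega$ in the sense of traces, so the second hypothesis of Theorem \ref{t3} is immediate. Applying Theorem \ref{t3} then gives both the rigidity property \eqref{e39} of $V(\omega)=V_H(\omega)\times V_3(\omega)$ and the property \eqref{e38} of $V_3(\omega)$, which is exactly assumption \eqref{e8} of Theorem \ref{t2}.

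Finally, since $V_H(\omega)$ and $V_3(\omega)$ are closed subspaces of $H^1(\omega)\times H^1(\omega)$ and $H^2(\omega)$ respectively by assumption, and the rigidity property \eqref{e9} has just been verified, Theorem \ref{t2} applies and delivers a minimizer of $J$ over $V(\omega)$. No step is really the main obstacle since the corollary is essentially a packaging of Theorems \ref{t2} and \ref{t3}; the only content beyond a direct invocation is the identification of the kernel of the symmetric gradient with $\operatorname{Rig}(\omega)$, which is standard and follows from a short distributional argument on the connected set $\omega$.
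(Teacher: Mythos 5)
Your proof is correct and takes the same route as the paper, which states Corollary~\ref{c1} as an immediate consequence of Theorems~\ref{t2} and~\ref{t3} without further detail. The one substantive verification you supply --- that the kernel of the symmetric gradient on the connected open set $\omega$ is exactly $\operatorname{Rig}(\omega)$, so that $V_H(\omega)\cap\operatorname{Rig}(\omega)=\{\boldsymbol{0}\}$ yields the first hypothesis of Theorem~\ref{t3}, while $V_3(\omega)\subset H^1_0(\omega)$ yields the second --- is precisely the standard distributional argument intended here, and the remaining hypotheses of Theorem~\ref{t2} (closedness of the subspaces and property \eqref{e8}) are checked as you describe.
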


	%%%%%%%%%%%%%%%%%%%%%%%%%%%%
	\section{A second type of rigidity theorems}
	\label{s5}
	
	In this section, we study the rigidity of plates subjected to boundary conditions of place on its tangential displacement fields $(u_\alpha):\omega\to{\capital{R}}^2$. We first prove that a plate is rigid if $(u_\alpha)$ vanish on the entire boundary of $\omega$ as a simple consequence of the integration by parts formula. We then substantially improve this rigidity result by using the implication  
	\begin{equation*}
		\partial_\alpha u_\beta+\partial_\beta u_\alpha+\partial_\alpha u_3\partial_\beta u_3=0 \text{ in }\omega \Rightarrow 
		\left\{
		\aligned
		\partial_1 u_1\leq 0  \text{ in }\omega, \\
		\partial_2 u_2\leq 0  \text{ in }\omega,
		\endaligned
		\right.
	\end{equation*}
	satisfied by all vector fields $(u_i)\in H^1(\omega)\times H^1(\omega)\times H^2(\omega)$, then by using boundary conditions on $u_\alpha$, not necessarily on the entire boundary of $\omega$,  to deduce that $u_\alpha$ must vanish in $\omega$. 
	
	We begin with the simpler case where the admissible displacements fields ${\boldletter{u}}=(u_i):\omega\to{\capital{R}}^3$ satisfy the boundary conditions $u_\alpha = 0$ on $\partial\omega$.

	\begin{thm}
		\label{t4}
		Let $\omega\subset {\capital{R}}^2$ be a bounded and connected open set with Lipschitz-continuous boundary. Assume that the set $V(\omega)=V_H(\omega)\times V_3(\omega)$, where $V_H(\omega)\subset H^1(\omega)\times H^1(\omega)$ and $V_3(\omega)\subset H^2(\omega)$, satisfy the following two conditions: 
		\begin{equation}\label{e40}\aligned{} 
			u_3 \in V_3(\omega) \textup{ and } \partial_{\alpha}u_3=0 \textup{ in } \omega \textup{ imply } u_3=0 \textup{ in } \omega,
			\endaligned\end{equation}
		and 
		\begin{equation}\label{e41}\aligned{} 
			(u_\alpha)\in V_H(\omega) \textup{ implies } u_\alpha=0 \textup{ on } \partial\omega. 
			\endaligned\end{equation}
		
		Then the set $V(\omega)$ satisfies the following rigidity property:
		\begin{equation}\label{e42}\aligned{} 
			{\boldletter{u}} \in V(\omega) \text{ and } {\boldletter{E}}({\boldletter{u}})={\boldletter{0}} \text{ in } \omega \text{ implies } {\boldletter{u}}={\boldletter{0}} \text{ in } \omega.
			\endaligned\end{equation}
	\end{thm}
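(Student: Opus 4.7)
The plan is to exploit the sign structure carried by the diagonal entries of ${\boldletter{E}}({\boldletter{u}}) = {\boldletter{0}}$: the equations $2\partial_\alpha u_\alpha = -(\partial_\alpha u_3)^2$ (no summation) are only consistent with the boundary condition $u_\alpha=0$ on $\partial\omega$ if $\partial_\alpha u_3$ vanishes identically. Once that is established, assumption \eqref{e40} forces $u_3=0$, and then a Korn-type argument on the tangential components completes the proof.

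More precisely, first I would expand ${\boldletter{E}}({\boldletter{u}})={\boldletter{0}}$ componentwise in the form
\begin{equation*}
2\partial_\alpha u_\alpha = -(\partial_\alpha u_3)^2 \quad (\text{no sum on }\alpha) \qquad \text{a.e. in }\omega,\ \alpha=1,2.
\end{equation*}
Since $(u_\alpha)\in V_H(\omega)$ satisfies $u_\alpha=0$ on $\partial\omega$ by assumption \eqref{e41}, and since $\omega$ has Lipschitz boundary, the trace theorem together with the divergence theorem yields $\int_\omega \partial_\alpha u_\alpha\,dy = \int_{\partial\omega} u_\alpha \nu_\alpha\,ds = 0$ for each $\alpha$. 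Substituting into the displayed identity, $\int_\omega (\partial_\alpha u_3)^2\,dy = 0$, hence $\partial_\alpha u_3 = 0$ a.e. in $\omega$ for $\alpha=1,2$.

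Next, applying assumption \eqref{e40} to $u_3\in V_3(\omega)$ with $\partial_\alpha u_3=0$ in $\omega$ yields $u_3 = 0$ in $\omega$. Inserting this back into ${\boldletter{E}}({\boldletter{u}})={\boldletter{0}}$ reduces the system to the linearized relation
\begin{equation*}
\partial_\alpha u_\beta + \partial_\beta u_\alpha = 0 \qquad \text{in } \omega.
\end{equation*}
Since $(u_\alpha)\in H^1_0(\omega)\times H^1_0(\omega)$ by \eqref{e41}, Korn's inequality on a bounded Lipschitz domain gives $\|(u_\alpha)\|_{H^1(\omega)}\leq C\sum_{\alpha,\beta}\|\partial_\alpha u_\beta+\partial_\beta u_\alpha\|_{L^2(\omega)}=0$, so $u_\alpha=0$ in $\omega$. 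Alternatively, one observes directly that the symmetric gradient vanishing on a connected set forces $(u_\alpha)$ to be an infinitesimal rigid motion $(a+cy_2,\,b-cy_1)$, which cannot have vanishing trace on the whole boundary of a bounded domain unless $a=b=c=0$.

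No step presents a real obstacle; the only genuine observation is that the quadratic terms on the right-hand side of $\mathbf{E}({\boldletter{u}})={\boldletter{0}}$ enforce the sign $\partial_\alpha u_\alpha\leq 0$ in $\omega$, so the global condition $\int_\omega \partial_\alpha u_\alpha\,dy=0$ provided by \eqref{e41} immediately pins $\partial_\alpha u_3$ to zero. After that, the hypotheses \eqref{e40} and \eqref{e41} have been carefully tailored so that the remaining implications are straightforward.
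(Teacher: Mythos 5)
Your proof is correct and follows essentially the same route as the paper: integrate the diagonal relations $2\partial_\alpha u_\alpha+(\partial_\alpha u_3)^2=0$ over $\omega$, use the vanishing traces from \eqref{e41} to kill the boundary term and conclude $\partial_\alpha u_3=0$, invoke \eqref{e40} to get $u_3=0$, and finish by observing that a field with vanishing symmetric gradient and zero trace on the whole boundary of a bounded connected set must vanish. The paper uses exactly this argument (phrasing the last step via affine/rigid fields, which is your stated alternative to Korn's inequality).
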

	
	\begin{proof}
		Let ${\boldletter{u}}\in V(\omega)$ satisfy ${\boldletter{E}}({\boldletter{u}})={\boldletter{0}}$ in $\omega$. Thanks to the boundary conditions \eqref{e41} satisfied by $u_\alpha$, integrating this relation over $\omega$ and using the integration by parts formula yields  
		\begin{equation*}
			\int_\omega \partial_\alpha u_3\partial_\beta u_3 dy=0.
		\end{equation*}
		In particular then,  
		\begin{equation*}
			\int_\omega (\partial_\alpha u_3)^2dy=0,
		\end{equation*}
		which combined with assumption \eqref{e40} satisfied by $u_3$ implies that
		\begin{equation*}
			u_3 = 0 \textrm{ in } \omega.
		\end{equation*}
		
		Consequently, the relation ${\boldletter{E}}({\boldletter{u}}) = {\boldletter{0}}$ in $\omega$ implies that  
		\begin{equation*}
			\partial_\beta u_\alpha + \partial_\alpha u_\beta = 0 \textrm{ in } \omega,
		\end{equation*}
		which in turn implies that  
		\begin{equation*}
			\partial_{\alpha\beta}u_\sigma=0  \textrm{ in } \omega.  
		\end{equation*}
		The set $\omega$ being connected, the functions $u_\sigma$ must be affine. Since in addition they vanish on the entire boundary of a bounded set, so necessarily at three non-collinear points, they must vanish in $\omega$:  
		\begin{equation*}
			u_\alpha = 0 \textrm{ in } \omega.
		\end{equation*}
		This completes the proof of the theorem. 
	\end{proof}

	An immediate consequence of Theorem \ref{t4} combined with Theorem \ref{t2} is that Rabier's existence result in \cite{rab} holds and the weaker assumptions of Theorem \ref{t4} instead of Rabier's assumptions that $u_\alpha=u_3=\partial_\nu u_3 = 0$ on $\partial\omega$. Note also the simpler proof given here to the rigidity result of Theorem \ref{t4} compared with the one in \cite{rab}. More specifically, we have the following existence result that supersedes Rabier's one in \cite{rab}: 
	
	\begin{cor}
		\label{c2}
		Let $\omega\subset {\capital{R}}^2$ be a bounded and connected open set with Lipschitz-continuous boundary. 
		
		Then the functional $J$ defined by \eqref{e1}-\eqref{e6} has a minimizer in the set $V(\omega)=V_H(\omega)\times V_3(\omega)$, where $V_H(\omega)$ is any closed subspace of the space  
		\begin{equation*}
			\{(u_\alpha)\in H^1(\omega)\times H^1(\omega); u_\alpha=0 \textup{ on } \partial\omega\}
		\end{equation*}
		and $V_3(\omega)$ is any closed subspace of the space $H^2(\omega)$ whose intersection with the space of affine functions over $\omega$ reduces to $\{0\}$. 
	\end{cor}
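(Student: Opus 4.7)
The plan is to obtain Corollary \ref{c2} as a direct combination of Theorem \ref{t2} (the existence theorem under the rigidity hypothesis) and Theorem \ref{t4} (rigidity under the tangential clamping condition). The only task is therefore to check that the data $V_H(\omega)$ and $V_3(\omega)$ of the corollary satisfy the hypotheses of both theorems, and this should be essentially a bookkeeping exercise.

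First, I would verify the hypotheses of Theorem \ref{t4}. Condition \eqref{e41}, namely $u_\alpha=0$ on $\partial\omega$ for every $(u_\alpha)\in V_H(\omega)$, is built into the definition of $V_H(\omega)$ in the statement of the corollary. For condition \eqref{e40}, let $u_3\in V_3(\omega)$ satisfy $\partial_\alpha u_3=0$ in $\omega$. Since $\omega$ is connected, this forces $u_3$ to be constant, hence affine; by the assumption that the intersection of $V_3(\omega)$ with the space of affine functions on $\omega$ reduces to $\{0\}$, we conclude $u_3=0$ in $\omega$. Consequently Theorem \ref{t4} yields the rigidity property \eqref{e9}.

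Second, I would verify the remaining hypothesis of Theorem \ref{t2}, namely \eqref{e8}. If $u_3\in V_3(\omega)$ satisfies $\partial_{\alpha\beta}u_3=0$ in $\omega$, then because $\omega$ is connected the function $u_3$ is necessarily an affine function of $y$; again the hypothesis on $V_3(\omega)$ forces $u_3=0$. Since $V_H(\omega)$ and $V_3(\omega)$ are closed subspaces of $H^1(\omega)\times H^1(\omega)$ and $H^2(\omega)$ respectively, all the assumptions of Theorem \ref{t2} are now met.

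Applying Theorem \ref{t2} to $V(\omega)=V_H(\omega)\times V_3(\omega)$ then yields the existence of a minimizer of $J$ in $V(\omega)$. I do not anticipate any genuine obstacle in this proof: the only slightly non-trivial observation is that the hypothesis on $V_3(\omega)$ simultaneously handles both the vanishing-affine condition \eqref{e8} required by Theorem \ref{t2} and the vanishing-constant condition \eqref{e40} required by Theorem \ref{t4}, since every constant function is in particular affine.
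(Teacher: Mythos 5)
Your proposal is correct and follows exactly the route the paper intends: Corollary \ref{c2} is stated there as an immediate consequence of Theorem \ref{t4} (which supplies the rigidity property \eqref{e9}) combined with Theorem \ref{t2}, and your verification of conditions \eqref{e40}, \eqref{e41} and \eqref{e8} from the hypotheses on $V_H(\omega)$ and $V_3(\omega)$ is precisely the bookkeeping that is left implicit in the paper.
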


	In the remaining part of this section we further weaken the assumptions of Rabier's existence theorem by showing that $u_\alpha$ need not vanish on the entire boundary of $\omega$, but only on a part of it, for the conclusion of Corollary \ref{c2} to hold. 
	
	However, the portion of the boundary of $\omega$ where $u_\alpha$ vanish cannot be too small, as shown by the following example. 
	Let $\omega := (0,1) \times (0,1)$ and $\gamma_0:=(0,1)\times \{0\}$. Let $f\in H^2((0,1))$ be any non-zero function that satisfies $f(0)=f'(0)=0$. Then the vector field ${\boldletter{u}}=(u_i):\omega\to {\capital{R}}^3$ defined by 
	\begin{equation*}\aligned{}
		u_2(y)&:=0 &&  \text{ for all } y\in\omega,\\
		u_3(y)&:=f(y_1) &&  \text{ for all } y=(y_\alpha)\in\omega,\\
		u_1(y)&:=-\frac12\int_0^{y_1}(f'(t))^2dt && \text{ for all } y=(y_\alpha)\in\omega,
		\endaligned\end{equation*}
	belongs to the space $V(\omega)=V_H(\omega)\times V_3(\omega)$, where 
	\begin{equation*}\aligned{}
		V_H(\omega)&:=\{(u_\alpha)\in H^1(\omega)\times H^1(\omega); \ u_\alpha=0 \text{ on } \gamma_0\},\\
		V_3(\omega)&:=\{u_3\in H^2(\omega);\ u_3=\partial_\nu u_3=0 \text{ on } \gamma_0\},
		\endaligned\end{equation*}
	and satisfies  
	\begin{equation*}
		{\boldletter{E}}({\boldletter{u}}) = {\boldletter{0}} \textrm{ in } \omega,
	\end{equation*}
	but ${\boldletter{u}}$ does not vanish in $\omega$. 
	
	To begin with, we study the case of a particular class of domains $\omega$, defined as follows:

	\begin{df}
		\label{d1}
		A set $\omega \subset {\capital{R}}^2$ is rectangle-like with respect to the vertical axis if 
		\begin{equation}\label{e43}\aligned{} 
			\omega=\{(y_1,y_2)\in {\capital{R}}^2; \ a<y_1<b, \ f(y_1) <y_2<g(y_1)\}
			\endaligned\end{equation}
		for some real numbers $a<b$ and some Lipschitz-continuous functions  
		\begin{equation*}
			f,g:[a,b]\to{\capital{R}}
		\end{equation*}
		satisfying $f(t)<g(t)$ for all $t\in [a,b]$. 
		
	\end{df}
	
	\begin{rem}
		\label{r1}
		A similar definition holds for sets that are rectangle-like with respect to the horiozontal axis, in which case a theorem similar to Theorem \ref{t5} can be established by using the same argument.
	\end{rem}
	
	The interest of Definition \ref{d1} for the purpose of this paper is that the rigidity result Theorem \ref{t4} can be improved for domains $\omega$ that are rectangle-like with respect to one cartesian axis. Note that the assumption \eqref{e44} in the next theorem means that the space $V_3(\omega)$ does not contain non-zero constant functions. 
	
	More specifically, we have the following rigidity result: 
	
	\begin{thm}
		\label{t5}
		Let $\omega\subset {\capital{R}}^2$ be a rectangle-like set with respect to the vertical axis. Let $\gamma_f\subset \partial\omega$ and $\gamma_g\subset\partial\omega$ denote the graphs of the functions $f$ and $g$ appearing in the definition \eqref{e43} of the set $\omega$. Assume that the set $V(\omega)=V_H(\omega)\times V_3(\omega)$, where $V_H(\omega)\subset H^1(\omega)\times H^1(\omega)$ and $V_3(\omega)\subset H^2(\omega)$, satisfy the following two conditions: 
		\begin{equation}\label{e44}\aligned{} 
			u_3 \in V_3(\omega) \textup{ and } \partial_{\alpha}u_3=0 \textup{ in } \omega \textup{ imply } u_3=0 \textup{ in } \omega,
			\endaligned\end{equation}
		and 
		\begin{equation}\label{e45}\aligned{} 
			(u_\alpha)\in V_H(\omega) \textup{ implies } u_\alpha=0 \textup{ on } \gamma_f\cup\gamma_g. 
			\endaligned\end{equation}
		
		Then the set $V(\omega)$ satisfies the following rigidity property:
		\begin{equation}\label{e46}\aligned{} 
			{\boldletter{u}} \in V(\omega) \text{ and } {\boldletter{E}}({\boldletter{u}})={\boldletter{0}} \text{ in } \omega \text{ implies } {\boldletter{u}}={\boldletter{0}} \text{ in } \omega.
			\endaligned\end{equation}
	\end{thm}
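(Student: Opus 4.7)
The plan is to exploit the sign condition $\partial_2 u_2 = -\frac{1}{2}(\partial_2 u_3)^2 \leq 0$ obtained from $E_{22}(\boldletter{u}) = 0$, together with an integration-by-parts argument tailored to the geometry of a rectangle-like domain. Since $\omega$ is rectangle-like with respect to the vertical axis, its boundary decomposes as $\partial\omega = \gamma_f \cup \gamma_g \cup \gamma_a \cup \gamma_b$, where $\gamma_a = \{a\} \times [f(a),g(a)]$ and $\gamma_b = \{b\} \times [f(b),g(b)]$ are (possibly degenerate) vertical segments on which the outward unit normal has vanishing second component. Using the boundary condition $u_2 = 0$ on $\gamma_f \cup \gamma_g$ from \eqref{e45}, the integration-by-parts formula gives
$$\int_\omega \partial_2 u_2 \, dy = \int_{\partial\omega} u_2 \nu_2 \, d\sigma = 0,$$
and combined with the identity $\partial_2 u_2 = -\frac{1}{2}(\partial_2 u_3)^2$ this forces $\partial_2 u_3 = 0$ in $\omega$.

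Once $\partial_2 u_3 = 0$ is established, the argument proceeds by a short sequential chain. First, $\partial_2 u_2 = 0$ follows immediately, so $u_2 \in H^1(\omega)$ is independent of $y_2$; the vanishing of its trace on $\gamma_f \cup \gamma_g$ then forces $u_2 = 0$ in $\omega$. Substituting $u_2 = 0$ and $\partial_2 u_3 = 0$ into the cross-relation $E_{12}(\boldletter{u}) = 0$ yields $\partial_2 u_1 = 0$, and the same argument applied to $u_1$ gives $u_1 = 0$ in $\omega$. Finally, $\partial_1 u_1 = 0$ combined with $E_{11}(\boldletter{u}) = 0$ gives $\partial_1 u_3 = 0$, so $\partial_\alpha u_3 = 0$ in $\omega$, and assumption \eqref{e44} then forces $u_3 = 0$.

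The main technical point is the initial integration-by-parts step, which relies simultaneously on the trace-vanishing of $u_2$ on the Lipschitz graphs $\gamma_f, \gamma_g$ and on the orthogonality of $\boldletter{\nu}$ to the $y_2$-axis on the vertical segments $\gamma_a, \gamma_b$; once this is in hand, the rest is purely algebraic and uses only the structure of $\boldletter{E}(\boldletter{u}) = \boldletter{0}$ together with assumption \eqref{e44}. The asymmetry of the hypothesis — vanishing of $u_\alpha$ only on the graph-portion of the boundary — is exactly matched by the geometric asymmetry of a rectangle-like domain, which is what makes the sign condition on $\partial_2 u_2$ exploitable through a single integration over $\omega$.
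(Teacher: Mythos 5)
Your proof is correct, and it reaches the key intermediate facts ($u_2=0$ and $\partial_2u_3=0$ in $\omega$) by a genuinely different mechanism than the paper. The paper argues slice by slice: by a theorem of Maz'ya, for almost every $c\in(a,b)$ the restriction of $u_2$ to the vertical segment $\omega_c$ is absolutely continuous with derivative $\partial_2u_2\leq 0$, and since this restriction vanishes at both endpoints (which lie on $\gamma_f\cup\gamma_g$), it vanishes identically; this gives $u_2=0$ first and then $\partial_2u_3=0$. You instead perform a single global integration by parts, $\int_\omega\partial_2u_2\,dy=\int_{\partial\omega}u_2\nu_2\,d\sigma=0$, using that $\nu_2=0$ on the vertical sides and $u_2=0$ on the graphs, and conclude $\partial_2u_2=0$ a.e.\ from the sign condition; this gives $\partial_2u_3=0$ first and $u_2=0$ second. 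Your mechanism is in fact the one the paper deploys for the more general Theorem \ref{t6} (of which Theorem \ref{t5} is essentially the special case $(e_\beta)=(0,1)$, since $u_\alpha\nu_2=0$ on all of $\partial\omega$ under hypothesis \eqref{e45}), so what your route buys is a shorter, more unified argument that avoids the one-dimensional trace theorem; what the paper's slicing route buys is independence from the two-dimensional Gauss--Green formula, whose use requires one to observe that a rectangle-like set with $f,g$ Lipschitz and $f<g$ on all of $[a,b]$ is a bounded Lipschitz domain (no cusps at the four corners) --- a point you should state explicitly. The remaining cascade through $E_{12}(\boldletter{u})=0$, $E_{11}(\boldletter{u})=0$ and assumption \eqref{e44} coincides with the paper's, including the step where a function with vanishing $y_2$-derivative and vanishing trace on $\gamma_f$ is identified with the zero function.
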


	\begin{proof} 
		If a vector field ${\boldletter{u}} \in V(\omega)$ satisfies ${\boldletter{E}}({\boldletter{u}})={\boldletter{0}}$ in $\omega$, it satisfies in particular 
		\begin{equation}\label{e47}\aligned{} 
			\partial_2 u_2=-\frac12(\partial_1u_3)^2\leq 0 \text{ \ in } \omega,
			\endaligned\end{equation}
		on the one hand.
		
		On the other hand, the definition \eqref{e43} of the set $\omega$ implies that, for each $c \in (a,b)$, the intersection of the straight line $y_1=c$ with ${\overline\omega}$ is a line segment  
		\begin{equation*}
			\omega_c:=\{(c,y_2); f(c)\leq y_2\leq g(c)\},
		\end{equation*}
		whose endpoints belong to the set  $\gamma_f\cup\gamma_g$. Since $u_2\in H^1(\omega)$, a classical theorem about Sobolev spaces (see, e.g., Maz'ya \cite[Theorem 1, \S 1.1.3]{maz}) shows that, for almost all $c\in (a,b)$, the trace of $u_2$ on ${\omega_c}$ is absolutely continuous and its derivative is equal to $\partial_2u_2$ almost everywhere on $\omega_c$. 
		
		Then we infer from \eqref{e47} and from assumptions \eqref{e45} that
		\begin{equation*}
			u_2|_{\omega_c} = 0
		\end{equation*}
		for almost all $c\in (a,b)$, which in turn implies that 
		\begin{equation}\label{e48}\aligned{} 
			u_2 = 0 \text{ in } \omega, 
			\endaligned\end{equation}
		then that 
		\begin{equation}\label{e49}\aligned{} 
			(\partial_2 u_3)^2=-2\partial_2u_2 = 0 \textrm{ in } \omega.
			\endaligned\end{equation}
		
		Next we infer from the assumption ${\boldletter{E}}({\boldletter{u}})={\boldletter{0}}$ in $\omega$ that  
		\begin{equation*}
			\partial_2 u_1 + \partial_1 u_2 + \partial_1 u_3 \partial_2 u_3 =0 \textrm{ in } \omega,
		\end{equation*}
		which combined with \eqref{e48} and \eqref{e49} shows that  
		\begin{equation*}
			\partial_2 u_1 = 0 \textrm{ a.e in } \omega.
		\end{equation*}
		This means that $u_1$ is constant with respect to the variable $y_2$, so it must vanish since its trace on $\gamma_f$ vanishes. So  
		\begin{equation*}
			u_1= 0 \text{ in } \omega,  
		\end{equation*}
		which combined with the relation (remember that ${\boldletter{E}}({\boldletter{u}})={\boldletter{0}}$ in $\omega$)  
		\begin{equation*}
			E_{11}({\boldletter{u}}):=\partial_1u_1+\frac12 (\partial_1u_3)^2=0 \text{ in } \omega,
		\end{equation*}
		next implies that  
		\begin{equation*}
			\partial_1u_3= 0 \text{ in } \omega.
		\end{equation*}
		Since we also have $\partial_2 u_3= 0$ in $\omega$ (see relation \eqref{e49} above), assumption  \eqref{e44} implies that  
		\begin{equation*}
			u_3= 0 \text{ in } \omega.
		\end{equation*}
		Since we already proved that $u_1=0$ and $u_2=0$ in $\omega$, this completes the proof of the theorem.
	\end{proof}

	An immediate consequence of Theorem \ref{t5}, combined with Theorem \ref{t2}, is the following existence result for partially clamped plates, the first in the literature to hold for this kind of plates without any smallness assumption on the applied forces.
	
	\begin{cor}
		\label{c3}
		Let $\omega\subset {\capital{R}}^2$ be a rectangle-like set with respect to the vertical axis. Let $\gamma_f\subset \partial\omega$ and $\gamma_g\subset\partial\omega$ denote the graphs of the functions $f$ and $g$ appearing in the definition \eqref{e43} of the set $\omega$. 
		
		Then the functional $J$ defined by \eqref{e1}-\eqref{e6} has a minimizer in the set $V(\omega)=V_H(\omega)\times V_3(\omega)$, where  
		\begin{equation*}
			V_H(\omega):=\big\{(u_\alpha)\in H^1(\omega)\times H^1(\omega); \ u_\alpha=0 \textup{ on } \gamma_f\cup\gamma_g\big\}
		\end{equation*}
		and $V_3(\omega)$ is any closed subspace of $H^2(\omega)$ whose intersection with the space of affine functions over $\omega$ reduces to $\{0\}$. 
	\end{cor}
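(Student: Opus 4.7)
The plan is to obtain this corollary as a direct consequence of Theorem \ref{t2}, using Theorem \ref{t5} to supply the rigidity property \eqref{e9}. The work therefore reduces to checking, one by one, that all hypotheses of both theorems are met by the data $(\omega, V_H(\omega), V_3(\omega))$ of the corollary.

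First I would note that the rectangle-like set $\omega$ is bounded, connected, and has Lipschitz-continuous boundary: boundedness follows from $a<b$ together with the boundedness of the Lipschitz functions $f,g$ on $[a,b]$, connectedness follows since each vertical slice $\{y_1=c\}\cap\omega$ is a non-empty interval and all slices are joined by varying $y_1$, and the Lipschitz property of $\partial\omega$ is inherited from that of $f$ and $g$. Next, $V_H(\omega)$ is a closed subspace of $H^1(\omega)\times H^1(\omega)$ because it is the kernel of the continuous trace map into $L^2(\gamma_f\cup\gamma_g)$, and $V_3(\omega)$ is closed by assumption. These observations handle the basic structural hypotheses.

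Second, I would check condition \eqref{e8} of Theorem \ref{t2} and condition \eqref{e44} of Theorem \ref{t5} together, since both reduce to the same observation. If $u_3\in V_3(\omega)$ satisfies $\partial_{\alpha\beta}u_3=0$ in the connected set $\omega$, then $u_3$ is affine on $\omega$, so the hypothesis that $V_3(\omega)$ has trivial intersection with the space of affine functions forces $u_3=0$; the same reasoning (in fact a weaker one, since $\partial_\alpha u_3=0$ makes $u_3$ constant and hence affine) yields \eqref{e44}. Condition \eqref{e45} of Theorem \ref{t5} holds tautologically from the definition of $V_H(\omega)$.

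Thus Theorem \ref{t5} applies and produces the rigidity property \eqref{e9} for $V(\omega)=V_H(\omega)\times V_3(\omega)$. With \eqref{e8} and \eqref{e9} both verified, Theorem \ref{t2} concludes that $J$ attains its infimum on $V(\omega)$. Since every step is a direct verification, there is no genuine obstacle: the only thing one must be careful about is extracting connectedness and the Lipschitz regularity of $\partial\omega$ from Definition \ref{d1}, and recognising that the assumption on $V_3(\omega)$ in the corollary is strong enough to cover both \eqref{e8} and \eqref{e44} at once.
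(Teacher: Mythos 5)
Your proposal is correct and follows exactly the route the paper intends: the corollary is stated there as an immediate consequence of Theorem \ref{t5} (which supplies the rigidity property \eqref{e9}) combined with Theorem \ref{t2}, and your verification of the remaining hypotheses (closedness of $V_H(\omega)$ and $V_3(\omega)$, condition \eqref{e8}, and conditions \eqref{e44}--\eqref{e45}) is the straightforward checking the paper leaves implicit.
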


	We conclude this section by generalizing the rigidity result of Theorem \ref{t5}. Note that the assumption \eqref{e50} in the next theorem means that the space $V_3(\omega)$ does not contain non-zero constant functions, while the assumption \eqref{e51} means that the elements of the space $V_H(\omega)$ vanish at all points of the boundary where the tangent exists and is not parallel to the given vector $(e_\beta)\in{\capital{R}}^2$. The typical example is the rectangle-like set of Theorem \ref{t5}, for which $(e_\beta)=(0,1)\in{\capital{R}}^2$. 
	
	\begin{thm}
		\label{t6}
		Let $\omega\subset {\capital{R}}^2$ be a bounded and connected open subset of ${\capital{R}}^2$ whose boundary is Lipschitz-continuous. Assume that the set $V(\omega)=V_H(\omega)\times V_3(\omega)$, where $V_H(\omega)\subset H^1(\omega)\times H^1(\omega)$ and $V_3(\omega)\subset H^2(\omega)$, satisfy the following two conditions: 
		\begin{equation}\label{e50}\aligned{} 
			u_3 \in V_3(\omega) \textup{ and } \partial_{\alpha}u_3=0 \textup{ in } \omega \textup{ imply } u_3=0 \textup{ in } \omega,
			\endaligned\end{equation}
		and the exists a non-zero vector $(e_\beta)\in{\capital{R}}^2$ such that 
		\begin{equation}\label{e51}\aligned{} 
			(u_\alpha)\in V_H(\omega) \textup{ implies } u_\alpha(\nu_\beta e_\beta)=0 \textup{ on } \partial\omega. 
			\endaligned\end{equation}
		
		Then the set $V(\omega)$ satisfies the following rigidity property:
		\begin{equation}\label{e52}\aligned{} 
			{\boldletter{u}} \in V(\omega) \text{ and } {\boldletter{E}}({\boldletter{u}})={\boldletter{0}} \text{ in } \omega \text{ implies } {\boldletter{u}}={\boldletter{0}} \text{ in } \omega.
			\endaligned\end{equation}
	\end{thm}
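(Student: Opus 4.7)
The plan is to adapt the line-integration argument of Theorem \ref{t5} to the direction $(e_\beta)$. Set ${\boldletter{e}}:=(e_\beta)$, choose a non-zero vector ${\boldletter{t}}=(t_\beta)\in{\capital{R}}^2$ orthogonal to ${\boldletter{e}}$, and, given ${\boldletter{u}}\in V(\omega)$ with ${\boldletter{E}}({\boldletter{u}})={\boldletter{0}}$, define the scalar fields $v:=e_\alpha u_\alpha$ and $w:=t_\alpha u_\alpha$ along with the directional derivatives $\partial_{\boldletter{e}}:=e_\alpha\partial_\alpha$ and $\partial_{\boldletter{t}}:=t_\alpha\partial_\alpha$. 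Contracting $E_{\alpha\beta}({\boldletter{u}})=0$ twice with ${\boldletter{e}}$, and then once with each of ${\boldletter{e}}$ and ${\boldletter{t}}$, produces the two identities
\begin{equation*}
\partial_{\boldletter{e}} v=-\tfrac12(\partial_{\boldletter{e}} u_3)^2\leq 0\qquad\textrm{and}\qquad \partial_{\boldletter{e}} w+\partial_{\boldletter{t}} v=-(\partial_{\boldletter{e}} u_3)(\partial_{\boldletter{t}} u_3)\qquad\textrm{in }\omega,
\end{equation*}
the first of which is the directional analog of \eqref{e47}. The boundary hypothesis \eqref{e51} forces $u_\alpha=0$ at every \emph{transversal} boundary point, that is, every $y\in\partial\omega$ at which $\nu_\beta(y) e_\beta\neq 0$; in particular $v=w=0$ at every such point.

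The next step is to verify that for almost every straight line $L$ parallel to ${\boldletter{e}}$, every endpoint of every connected component of $L\cap\omega$ is transversal. Parametrize each Jordan-curve component of $\partial\omega$ by arc length $\gamma$ and set $h(\sigma):=t_\alpha\gamma_\alpha(\sigma)$. Since the tangent $\gamma'(\sigma)$ is parallel to ${\boldletter{e}}$ exactly when the outward normal $\nu$ satisfies $\nu_\beta e_\beta=0$, the non-transversal subset of $\partial\omega$ corresponds precisely to $\{h'=0\}$. The Lipschitz area estimate $|h(\{h'=0\})|\leq\int_{\{h'=0\}}|h'(\sigma)|\,d\sigma=0$ shows that this subset projects onto a set of one-dimensional Lebesgue measure zero on the ${\boldletter{t}}$-axis, which, combined with the standard fact that for a.e.\ $L$ the intersection $L\cap\partial\omega$ meets each local Lipschitz graph-piece in finitely many points, yields the claim.

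With this in hand I would apply the classical theorem on $H^1$ traces on parallel lines (Maz'ya \cite[Theorem 1, \S 1.1.3]{maz}) to $v$ and $w$. On almost every ${\boldletter{e}}$-line $L$ and each component of $L\cap\omega$, the trace of $v$ is absolutely continuous with derivative $\partial_{\boldletter{e}} v\leq 0$ and vanishes at both endpoints; hence $v\equiv 0$ on that component, so $v=0$ in $\omega$ and consequently $\partial_{\boldletter{e}} u_3=0$ in $\omega$. The second identity then reduces to $\partial_{\boldletter{e}} w=0$, so $w$ is constant on each component of $L\cap\omega$; since at least one endpoint is transversal, $w$ vanishes there and hence $w=0$ in $\omega$. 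Because $\{{\boldletter{e}},{\boldletter{t}}\}$ is a basis of ${\capital{R}}^2$, $v=w=0$ forces $u_\alpha=0$ in $\omega$; the relation $E_{\alpha\beta}({\boldletter{u}})=0$ then reduces to $\partial_\alpha u_3\partial_\beta u_3=0$, so $\partial_\alpha u_3=0$ in $\omega$, and assumption \eqref{e50} finishes the proof.

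The main obstacle I expect is the geometric projection lemma: rigorously showing, for a bounded Lipschitz domain, that the subset of $\partial\omega$ where the tangent is parallel to ${\boldletter{e}}$ projects onto a measure-zero subset of the axis perpendicular to ${\boldletter{e}}$. Once that and the $H^1$-trace-on-lines ingredient are in place, the remainder of the argument is a directional rewriting of the proof of Theorem \ref{t5}.
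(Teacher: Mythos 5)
Your proof is correct, but it takes a genuinely different route from the paper's. You generalize the slicing argument of Theorem \ref{t5} to an arbitrary direction $(e_\beta)$ on a general Lipschitz domain; the key new ingredient is your Sard-type projection lemma, i.e.\ that the set of boundary points where the tangent is parallel to $(e_\beta)$ (equivalently $\nu_\beta e_\beta=0$) projects onto a Lebesgue-null subset of the $\boldsymbol{t}$-axis, via $|h(\{h'=0\})|\leq\int_{\{h'=0\}}|h'|\,d\sigma=0$ for the Lipschitz function $h=t_\alpha\gamma_\alpha$. To make this airtight you should also note that $h$ maps the $d\gamma$-null sets where $\nu$ fails to exist, or where the trace identity \eqref{e51} fails, to null sets, so that for a.e.\ line every endpoint of every component is transversal \emph{and} carries $u_\alpha=0$; and the identification, for a.e.\ line, of the endpoint limits of the absolutely continuous representatives of $v$ and $w$ with their boundary traces is standard but deserves a word, since the local Lipschitz charts need not be graphs over the $\boldsymbol{t}$-direction. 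The paper instead argues globally: after reducing to $(e_\beta)=(1,0)$, it integrates $\partial_1u_1=-\frac12(\partial_1u_3)^2$ over $\omega$ and uses $\int_{\partial\omega}u_1\nu_1\,d\gamma=0$ to get $\partial_1u_3=0$ at once; it then shows that $\partial_1u_2$ is constant and $u_1$ is affine, kills the affine part using only the \emph{non}-negligibility of the projection of $\{\nu_1\neq0\}$ (a weaker geometric fact than yours), and finally obtains $u_2=0$ from the identity $\int_\omega u_2^2\,dy=\int_{\partial\omega}(u_2\nu_1)\,y_1u_2\,d\gamma=0$. Your approach buys a uniform treatment in which Theorem \ref{t5} becomes a special case and no reduction to a coordinate direction is needed, at the price of the stronger projection lemma and the a.e.-line trace machinery; the paper's route avoids all slicing at the cost of two ad hoc integration-by-parts identities. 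Both arguments are sound.
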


	\begin{proof} 
		Without losing in generality we may assume that $(e_\beta)=(1,0)\in{\capital{R}}^2$, in which case assumption \eqref{e51} becomes 
		\begin{equation}\label{e53}\aligned{} 
			(u_\alpha)\in V_H(\omega) \textup{ implies } u_\alpha\nu_1=0 \textup{ on } \partial\omega. 
			\endaligned\end{equation}
		Let ${\boldletter{u}} \in V(\omega)$ be a vector field that satisfies the matrix equation ${\boldletter{E}}({\boldletter{u}})={\boldletter{0}}$ in $\omega$. Then 
		\begin{equation*}\aligned{}
			\partial_1 u_1+\frac12(\partial_1u_3)^2&= 0 \text{ \ in } \omega,\\
			\partial_1 u_2+\partial_2u_1+ \partial_1u_3\partial_2 u_3 &= 0 \text{ \ in } \omega,\\
			\partial_2 u_2+\frac12(\partial_2 u_3)^2 &= 0 \text{ \ in } \omega.
			\endaligned\end{equation*}
		Integrating the first equation over $\omega$ and using the integration by parts formula in conjunction with assumption \eqref{e53} satisfied by the trace of $u_1$ on the boundary $\gamma:=\partial\omega$ of $\omega$ implies that 
		\begin{equation*}\aligned{}
			\int_\omega(\partial_1u_3)^2dy=-\int_{\partial\omega}u_1\nu_1 d\gamma=0, 
			\endaligned\end{equation*}
		then that 
		\begin{equation}\label{e54}\aligned{} 
			\partial_1 u_3&= 0 \text{ \ in } \omega.
			\endaligned\end{equation}
		
		Therefore, the previous system becomes
		\begin{equation}\label{e55}\aligned{} 
			\partial_1 u_1&= 0 \text{ \ in } \omega,\\
			\partial_1 u_2+\partial_2u_1&= 0 \text{ \ in } \omega,\\
			\partial_2 u_2+\frac12(\partial_2 u_3)^2 &= 0 \text{ \ in } \omega.
			\endaligned\end{equation}
		From these equations, we next deduce that 
		\begin{equation*}\aligned{}
			&\partial_1(\partial_1 u_2)=-\partial_1(\partial_2u_1)=-\partial_2(\partial_1u_1)=0 \text{ in } {\italic{D}}'(\omega),\\
			&\partial_2(\partial_1u_2)=\partial_1(\partial_2u_2)=-\partial_2u_3\partial_1(\partial_2u_3)=-\partial_2u_3\partial_2(\partial_1u_3)=0 \text{ in } {\italic{D}}'(\omega),
			\endaligned\end{equation*}
		so that, since $\omega$ is connected, there exists a constant $A\in {\capital{R}}$ such that
		\begin{equation}\label{e56}\aligned{} 
			\partial_1 u_2 =A \text{ in } \omega. 
			\endaligned\end{equation}
		Then we infer from system \eqref{e55} that 
		\begin{equation*}\aligned{}
			&  \partial_1u_1=0 &&\text{ in } \omega,\\
			& \partial_2u_1=-A &&\text{ in } \omega,
			\endaligned\end{equation*}
		which combined with the assumption that $\omega$ is connected implies that the function $u_1$ is affine; so there exists a constant $B\in {\capital{R}}$ such that 
		\begin{equation}\label{e57}\aligned{} 
			u_1(y)=-Ay_2+B \text{ for all } y=(y_\alpha)\in \omega.
			\endaligned\end{equation}
		
		Combined with assumption \eqref{e53}, the above relation implies that 
		\begin{equation}\label{e58}\aligned{} 
			-Ay_2+B=0 \text{ for all } y_2\in {\italic{E}}_2, 
			\endaligned\end{equation}
		where ${\italic{E}}_2$ is the projection on the vertical axis of the portion 
		of the boundary of $\omega$ where the unit outer normal vector $\nu=(\nu_1,\nu_2)$ is well defined and satisfies $\nu_1\neq 0$, that is  
		\begin{equation*}
			{\italic{E}}_2:=\{y_2\in{\capital{R}}; \ \exists y_1\in {\capital{R}}, (y_1,y_2)\in  \partial^\sharp\omega \text{ and } \nu_1(y_1,y_2)\neq 0\},
		\end{equation*}
		where 
		\begin{equation*}\aligned{}
			\partial^\sharp\omega:=\{y\in\partial\omega; \ \nu(y) \text{ is well defined}\}. 
			\endaligned\end{equation*}
		
		Note that the boundary $\gamma:=\partial\omega$ of $\omega$ is Lipschitz-continuous by the assumptions of the theorem, so it has a unit outer normal vector at almost all points of $\partial\omega$. Thus the set $(\partial\omega\setminus\partial^\sharp\omega)$ is $d\gamma$-negligible. Since the set $\omega$ is also bounded by the assumptions of the theorem, this implies that the set ${\italic{E}}_2$ cannot be negligible (as a subset of ${\capital{R}}$), otherwise the boundary of $\omega$ would be contained in a straight line, which is impossible since $\omega$ is bounded. Hence relation \eqref{e58} implies that $A=B=0$, so that   
		\begin{equation*}
			u_1=0 \text{ in } \omega
		\end{equation*}
		and  
		\begin{equation*}
			\partial_1u_2=0 \text{ in } \omega.
		\end{equation*}
		
		Next, using this last relation and the integration by parts formula in conjunction with assumption \eqref{e53} satisfied by the trace of $u_2$ on the boundary of $\omega$, we deduce that 
		\begin{equation*}\aligned{}
			\int_\omega (u_2(y))^2 dy=\int_\omega u_2(y)\partial_1\big(y_1u_2(y)\big)dy=\int_{\partial\omega}(u_2\nu_1)(y)y_1u_2(y)\gamma=0. 
			\endaligned\end{equation*}
		Hence
		\begin{equation}\label{e59}\aligned{} 
			u_2&= 0 \text{ \ in } \omega.
			\endaligned\end{equation}
		Then the last equation of system \eqref{e55} implies that 
		\begin{equation}\label{e60}\aligned{} 
			\partial_2u_3&= 0 \text{ \ in } \omega,
			\endaligned\end{equation}
		which combined with relation \eqref{e54} and with assumption \eqref{e50} of the theorem implies that  
		\begin{equation*}
			u_3=0 \text{ \ in } \omega.
		\end{equation*}
		
		Since we already proved that $u_1=0$ and $u_2=0$ in $\omega$, this completes the proof of the theorem.
	\end{proof}
	
	We conclude this section and paper by stating the existence result for the Kirchhoff-Love nonlinear plate model obtained by using the rigidity result of Theorem \ref{t6} to replace the assumptions of the existence result of Theorem \ref{t2}: 
	
	\begin{cor}
		\label{c4}
		Let $\omega\subset {\capital{R}}^2$ be a bounded and connected open subset of ${\capital{R}}^2$ whose boundary is Lipschitz-continuous, let $(\nu_\alpha)\in L^\infty(\partial\omega;{\capital{R}}^2)$ denote unit outer normal vector field to the boundary of $\omega$, let $(e_\alpha)\in{\capital{R}}^2$ be any vector, and let   
		\begin{equation*}
			\gamma_0:=\{y\in\partial\omega; \ \nu_\alpha (y)e_\alpha\neq 0\}.
		\end{equation*}

		Then the functional $J$ defined by \eqref{e1}-\eqref{e6} has a minimizer in the set $V(\omega)=V_H(\omega)\times V_3(\omega)$, where  
		\begin{equation*}
			V_H(\omega):=\big\{(u_\alpha)\in H^1(\omega)\times H^1(\omega); \ u_\alpha=0 \textup{ on } \gamma_0\big\}
		\end{equation*}
		and $V_3(\omega)$ is any closed subspace of $H^2(\omega)$ whose intersection with the space of affine functions over $\omega$ reduces to $\{0\}$. 
	\end{cor}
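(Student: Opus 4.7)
The plan is to deduce the corollary by combining Theorem~\ref{t2} with the rigidity result of Theorem~\ref{t6}. The role of the present proof is essentially to check that the structural hypotheses of these two theorems are satisfied in the setup of the corollary, so the argument is mostly a verification.

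First, I would verify the hypotheses of Theorem~\ref{t2}. The subspace $V_3(\omega)\subset H^2(\omega)$ is closed by assumption, and $V_H(\omega)\subset H^1(\omega)\times H^1(\omega)$ is closed because it is defined by the vanishing of a trace, which is continuous thanks to the Lipschitz-continuity of $\partial\omega$. To verify assumption~\eqref{e8} of Theorem~\ref{t2}, suppose $u_3\in V_3(\omega)$ satisfies $\partial_{\alpha\beta}u_3=0$ in $\omega$; since $\omega$ is connected, $u_3$ must be an affine function, and the assumption that $V_3(\omega)$ intersects the space of affine functions only at $\{0\}$ then forces $u_3=0$ in $\omega$.

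The main step is to establish the rigidity property~\eqref{e9}, which I would obtain from Theorem~\ref{t6} applied with the vector $(e_\beta)$ given in the corollary (tacitly assumed non-zero, since otherwise $\gamma_0=\emptyset$ and rigidity cannot be expected, no boundary condition being imposed on the tangential displacements). Hypothesis~\eqref{e50} of Theorem~\ref{t6} follows from the connectedness of $\omega$: if $u_3\in V_3(\omega)$ and $\partial_\alpha u_3=0$ in $\omega$, then $u_3$ is constant, hence affine, and so $u_3=0$ by the assumption on $V_3(\omega)$. Hypothesis~\eqref{e51} follows by a trivial case analysis on points of $\partial\omega$: for any $(u_\alpha)\in V_H(\omega)$ and any $y\in\partial\omega$, either $y\in\gamma_0$, in which case $u_\alpha(y)=0$ by the definition of $V_H(\omega)$; or $y\in\partial\omega\setminus\gamma_0$, in which case $\nu_\beta(y)e_\beta=0$ by the very definition of $\gamma_0$. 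In either case the product $u_\alpha(y)\,\nu_\beta(y)e_\beta$ vanishes, so that $u_\alpha(\nu_\beta e_\beta)=0$ on $\partial\omega$.

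Having verified all its hypotheses, Theorem~\ref{t6} delivers the rigidity property~\eqref{e9}, and Theorem~\ref{t2} then yields the existence of a minimizer of $J$ in $V(\omega)$. I do not anticipate any serious obstacle: the proof is purely a bookkeeping exercise, assembling two already-established results. The only point that deserves to be written out carefully is the case distinction used to verify~\eqref{e51}, which exhibits the precise sense in which imposing $u_\alpha=0$ on $\gamma_0$ is exactly tailored to produce the \emph{pointwise} identity $u_\alpha\nu_\beta e_\beta=0$ on the whole of $\partial\omega$ required by Theorem~\ref{t6}.
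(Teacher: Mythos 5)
Your proposal is correct and follows exactly the route the paper intends: the corollary is obtained by verifying the hypotheses of Theorem~\ref{t2} (closedness of $V_H(\omega)$ via continuity of the trace, condition~\eqref{e8} via connectedness and the affine-function assumption on $V_3(\omega)$) and supplying the rigidity property~\eqref{e9} through Theorem~\ref{t6}, whose hypothesis~\eqref{e51} holds by the pointwise case analysis on $\gamma_0$ versus $\partial\omega\setminus\gamma_0$. Your remark that $(e_\alpha)$ must tacitly be non-zero for the rigidity argument to apply is a fair observation about a small imprecision in the statement.
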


	\section*{Acknowledgements} 
	This work has been completed while the second author was visiting the Institute of Advanced Studies in Mathematics of Harbin Institute of Technology in China, whose hospitality and support are hereby gratefully acknowledged. The first author is supported by the grant PRIMUS/24/SCI/020 of Charles University and within the frame of the project Ferroic Multifunctionalities (FerrMion) [project No. CZ.02.01.01/00/22\_008/0004591], within the Operational Programme Johannes Amos Comenius co-funded by the European Union (JS).

	% REFERENCES:
	%%%%%%%%%%%%%%%%%%%%%%%%%%%%
	\newcommand \auth{} 
	\newcommand \jour {} 
	\newcommand \book {}

\end{document}